\documentclass{article}
\usepackage{tikz-cd}
\usepackage{nicematrix}
\usepackage{colortbl}
\usepackage{color}

\usepackage{fullpage}
\usepackage{authblk}
\usepackage{blkarray}
\usepackage{amsmath}
\usepackage{amsthm}
\usepackage{amssymb}
\usepackage{bm}
\usepackage{setspace}
\usepackage[numbers,sort&compress]{natbib}

\usepackage{amsfonts}
\usepackage{eucal}
\usepackage{latexsym}
\usepackage{mathdots}
\usepackage{mathrsfs}
\usepackage{enumitem}



\newcommand{\be}{\begin{equation}}
\newcommand{\ee}{\end{equation}}
\newcommand{\ba}{\begin{eqnarray}}
\newcommand{\ea}{\end{eqnarray}}
\newcommand{\nn}{\nonumber}

\newcommand{\bal}{\begin{align}}
\newcommand{\eal}{\end{align}}
\newcommand{\baln}{\begin{align*}}
\newcommand{\ealn}{\end{align*}}

\newcommand{\bi}{\begin{itemize}}
\newcommand{\ei}{\end{itemize}}
\newcommand{\bn}{\begin{enumerate}}
\newcommand{\en}{\end{enumerate}}
\newcommand{\bbm}{\begin{bmatrix}}
\newcommand{\ebm}{\end{bmatrix}}
\newcommand{\bpm}{\begin{pNiceMatrix}}
\newcommand{\epm}{\end{pNiceMatrix}}
\newcommand{\bsm}{\left ( \begin{smallmatrix}}
\newcommand{\esm}{\end{smallmatrix} \right) }


\newcommand{\mr}{\ensuremath{\mathrm}}
\newcommand{\scr}{\ensuremath{\mathscr}}

\newcommand{\mc}{\ensuremath{\mathcal}}

\newcommand{\ov}{\ensuremath{\overline}}
\newcommand{\sm}{\ensuremath{\setminus}}


\newcommand{\Om}{\ensuremath{\Omega}}

\newcommand{\La}{\ensuremath{\Lambda }}
\newcommand{\la}{\ensuremath{\lambda }}

\newcommand{\eps}{\ensuremath{\epsilon }}


\def\C{\mathbb{C}}
\def\R{\mathbb{R}}

\def\N{\mathbb{N}}

\newcommand{\cH}{\ensuremath{\mathcal{H}}}
\newcommand{\cJ}{\ensuremath{\mathcal{J} }}

\def\nbdom{\mathrm{Dom} \, }
\def\nbran{\mathrm{Ran} \, }
\def\nbker{\mathrm{Ker} \, }
\def\nbdim{\mathrm{dim} \, }

\newcommand{\Addresses}{{
  \bigskip
  \footnotesize

  R.~T.~W.~Martin, \textsc{Department of Mathematics, University of Manitoba}\par\nopagebreak
  \textit{E-mail address:} \texttt{Robert.Martin@umanitoba.ca}

\vspace{1cm}

}}


\newcommand{\ip}[2]{\ensuremath{\langle {#1} , {#2} \rangle}}

\newcommand{\ran}[1]{\ensuremath{\mathrm{Ran} \left( {#1} \right) }}

\newcommand{\inv}[1]{\ensuremath{ \left[ 1/{#1} \right] }}


\numberwithin{equation}{section}
\numberwithin{subsection}{section}

\newtheorem{thm}{Theorem}

\newtheorem{lemma}{Lemma}
\newtheorem{prop}{Proposition}

\newtheorem*{thm*}{Theorem}
\newtheorem{thmA}{Theorem}

\newtheorem{corA}{Corollary}

\theoremstyle{definition}
\newtheorem{defn}{Definition}
\newtheorem{remark}{Remark}

\title{On unitary equivalence to a self-adjoint or doubly--positive Hankel operator}

\author{Robert T.W. Martin\thanks{Supported by NSERC grant 2020-05683}}
\affil{\footnotesize University of Manitoba}

\date{}

\begin{document}

\NiceMatrixOptions{cell-space-top-limit=1pt}
\NiceMatrixOptions{cell-space-bottom-limit=1pt}

\maketitle

\begin{abstract}
Let $A$ be a bounded, injective and self-adjoint linear operator on a complex separable Hilbert space. We prove that there is a pure isometry, $V$, so that $AV>0$ and $A$ is Hankel with respect to $V$, \emph{i.e.} $V^*A = AV$, if and only if $A$ is not invertible. The isometry $V$ can be chosen to be isomorphic to $N \in \N \cup \{ + \infty \}$ copies of the unilateral shift if $A$ has spectral multiplicity at most $N$. We further show that the set of all isometries, $V$, so that $A$ is Hankel with respect to $V$, are in bijection with the set of all closed, symmetric restrictions of $A^{-1}$.
\end{abstract}

\section{Introduction}

Let $\cH$ be a separable complex Hilbert space and let $V$ be an isometry on $\cH$. We will say that a bounded operator, $A \in \scr{L} (\cH )$, is \emph{$V-$Hankel} or \emph{Hankel with respect to $V$} if 
$$ V^* A = A V, $$ where $\scr{L} (\cH)$ denotes the Banach space of bounded linear operators on $\cH$. A bounded operator, $A \in \scr{L} (\cH )$, will be said to obey the \emph{doubly--positive Hankel condition} with respect to $V$, or $A$ is $V-$\emph{Hankel and doubly--positive} if $A$ is positive semi-definite, $V-$Hankel and $B := AV = V^* A$ is also positive semi-definite \cite{Hankpos2,Hankpos}. An orthonormal set $\{ h_k \} _{k=1} ^N \subseteq \cH$, $N \in \N \cup \{ + \infty \}$, is said to be a \emph{cyclic orthonormal basis} for a self-adjoint operator, $A$, if
$$ \cH = \bigoplus _{k=1} ^N \cH _k, \quad \mbox{where} \quad \cH _k := \bigvee _{j=0} ^\infty A^j h_k, $$ and $\bigvee$ denotes closed linear span. That is, $\cH _k$ and $\cH _j$ are orthogonal subspaces for $j \neq k$ and each $\cH _j$ is $A-$invariant.  We will call the minimal such $N \in \N \cup \{ + \infty \}$ the \emph{spectral} or \emph{cyclic multiplicity} of $A$. That is, $A=A^*$ has cyclic multiplicity at most $N$ if and only if it can be written as the orthogonal sum of $N$ self-adjoint operators, $A = \bigoplus _{j=1} ^N A_j$ which each have a single cyclic vector. Any such $A_j$, \emph{i.e.} any self-adjoint operator which has a cyclic vector, is said to be \emph{multiplicity--free}. Equivalently, each $A_j$ has simple spectrum. An isometry, $V$, is said to be \emph{pure}, if it is unitarily equivalent to several copies of the unilateral shift $S = M_z$, acting on vector--valued Hardy space, $H^2 \otimes \C ^n$, $n \in \N \cup \{ + \infty \}$. Here, $n = \nbdim \nbran V ^\perp$, will be called the \emph{multiplicity} of $V$.  The notation $\nbran A$ denotes the range of $A$, $\nbdim \nbran A$ is the dimension of the range of $A$ and $\nbran A ^\perp := \cH \ominus \nbran A$ is the orthogonal complement of $\nbran A$.  Recall that by the Wold decomposition, any isometry on a separable Hilbert space decomposes as $V = V_0 \oplus U$ where $V_0$ is pure and $U$ is unitary.

The following theorem is our main result:

\begin{thmA} \label{main}
Let $A$ be a bounded, injective and self-adjoint operator on a separable Hilbert space $\cH$ with cyclic multiplicity at most $N \in \N \cup \{ + \infty \}$. Then, there is a pure isometry, $V$, of multiplicity $N$ so that $A$ is $V-$Hankel and $AV>0$ if and only if $A$ is not invertible. 
\end{thmA}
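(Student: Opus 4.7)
The plan is to prove the two implications separately.

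For the ``only if'' direction, the observation is that a pure isometry $V$ on a nonzero Hilbert space satisfies $\nbker V^* = \nbran V^\perp \neq \{0\}$. If $A$ were invertible, then for any $0 \neq y \in \nbker V^*$ one could set $x = A^{-1}y$ and compute $AVx = V^*Ax = V^*y = 0$; injectivity of $A$ forces $Vx = 0$, and the isometry property forces $x = 0$, contradicting $y \neq 0$. Hence the existence of such a $V$ forces $A$ to be non-invertible.

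The main content is the converse. My approach is to first reduce to the multiplicity-free case. Given cyclic multiplicity $\leq N$, decompose $A = \bigoplus_j A_j$ into multiplicity-free pieces. A subtlety is that individual $A_j$ may be invertible even though the overall $A$ is not, so before applying the single-component construction one must regroup the pieces so that each block is non-invertible (and afterwards pad with multiplicity-free shift-Hankel blocks if needed to achieve the prescribed multiplicity $N$ exactly). The multiplicity-free subproblem reduces, via the spectral theorem, to the following: given a finite compactly supported Borel measure $\mu$ on $\R$ with $\mu(\{0\}) = 0$ and $0 \in \mathrm{supp}(\mu)$, produce a pure isometry $V$ of multiplicity $1$ on $L^2(\mu)$ so that $V^*M_x = M_xV$ and $M_xV \geq 0$.

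To build $V$ in the multiplicity-free case, I would exploit the bijection promised by the abstract between $V$-Hankel isometries for $A$ and closed symmetric restrictions of $A^{-1}$. Since $A$ is non-invertible, $A^{-1} = M_{1/x}$ is an unbounded, densely defined, self-adjoint operator on $L^2(\mu)$, and it admits many closed symmetric, non-self-adjoint restrictions. The idea is to associate to such a restriction $T$ an isometry $V$ via a Cayley-type transform, tuned so that the Hankel identity $V^*A = AV$ holds. The positivity condition $AV \geq 0$ forces $T$ to satisfy a one-sided, Stieltjes-like spectral condition rather than just symmetry. Verifying that $V$ is pure of the correct multiplicity should then reduce to identifying the deficiency indices of $T$ with $\nbdim \nbran V^\perp$.

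The principal obstacle is showing that, for every admissible $A$, a suitable symmetric restriction $T$ of $A^{-1}$ with the Stieltjes-type positivity actually exists, and that the resulting $V$ is genuinely pure of the prescribed multiplicity. Equivalently, in the concrete picture, one wants to exhibit an orthonormal basis $\{e_n\}_{n \geq 0}$ of $L^2(\mu)$ such that $\langle x e_m, e_n\rangle_\mu = h_{m+n}$ depends only on $m+n$, with $(h_{m+n+1})_{m,n \geq 0}$ positive semi-definite on $\ell^2$. By Hamburger's theorem the latter amounts to the existence of a positive measure $\sigma$ on $\R$ with $h_{k+1} = \int t^k \, d\sigma$; constructing such a basis (equivalently, such a $\sigma$) for every admissible $\mu$ is the crux of the argument.
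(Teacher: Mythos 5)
Your necessity argument is correct and in fact slightly more direct than the paper's (which iterates the identity $A^{-1}V^{*} = VA^{-1}\cdot V^{*(k-1)}$ and uses $V^{*k}\to 0$ strongly): choosing $0\neq y\in\nbker V^*$, setting $x=A^{-1}y$ and invoking injectivity of $A$ together with the isometry property of $V$ gives the contradiction immediately. The reduction to the multiplicity-free case, including your observation that the cyclic summands must be regrouped because some of them may be invertible, also matches the paper's first lemma (though note that one cannot literally ``pad'' with new blocks on a fixed $\cH$; the paper instead splits the spectrum of a non-invertible summand near $0$ into countably many pieces to raise the count of non-invertible blocks to any $M\geq N$).

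The sufficiency direction, however, contains a genuine gap: you correctly identify that everything hinges on producing a suitable closed symmetric restriction $T$ of $A^{-1}$ (equivalently, an orthonormal basis in which $M_x$ is a Hankel matrix whose shifted matrix is positive semi-definite), but you then declare this existence question to be ``the crux'' and stop, so the central construction is never carried out. The paper closes exactly this gap in two steps. First, it exhibits the restriction explicitly: realizing $A^{-1}$ as $M_t$ on $L^2(\mu)$ for a Herglotz measure $\mu$ with $\mu(\R)=+\infty$, it takes $\inv{A}_0 := A^{-1}|_{\scr{D}_0}$ with $\scr{D}_0 = \left\{ h \in \nbdom M_t \, : \, \int h\, d\mu = 0 \right\}$, which by a cited result of Aleman--Martin--Ross is closed, densely defined, \emph{simple} and symmetric with deficiency indices $(1,1)$. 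Second, the isometry is obtained not by a Cayley-type transform but as the partial isometry in the polar decomposition $\inv{A}_0 = V|\inv{A}_0|$, extracted via an unbounded Douglas factorization lemma comparing the quadratic forms of $A^{-1}$ and $\inv{A}_0$; with this choice the positivity $AV = |\inv{A}_0|^{-1} > 0$ is \emph{automatic}---no Stieltjes-type side condition on $T$ is needed---and purity and multiplicity one follow from simplicity of $\inv{A}_0$ together with a Kre\u{\i}n-type lemma identifying $\nbran V^\perp = \nbran \inv{A}_0^{\,\perp}$ with the deficiency space $\ran{\inv{A}_0 + iI}^\perp$. Without a concrete construction of this kind (or a solution of the equivalent moment-type problem you pose at the end), your outline does not yet establish the theorem.
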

\setcounter{thmA}{2}
\setcounter{corA}{1}

Let $\nbdom T$ denote the \emph{domain} of a linear operator, $T$. We will say that a closed linear operator, $T$, is \emph{positive semi-definite} and write $T \geq 0$ if $\ip{Tx}{x} _{\cH} \geq 0$ for all $x \in \nbdom T$. If $T$ is also injective, we will write $T>0$ and say that $T$ is \emph{positive}. Under the additional assumption that $A>0$, Theorem \ref{main} recovers a recent result of Niemiec \cite[Theorem 1]{Hankpos} as a special case:
\begin{corA} \label{DposHank}
Let $A \in \scr{L} (\cH)$ be positive semi-definite and injective with cyclic multiplicity $N \in \N \cup \{ + \infty \}$. Then, for any $m \geq N$, $m \in \N \cup \{ + \infty \}$, there exists a pure isometry, $V$, of multiplicity $m$ so that $A$ is $V-$Hankel and doubly--positive if and only if $A$ is not invertible.
\end{corA}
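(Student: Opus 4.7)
The plan is to deduce the corollary almost directly from Theorem~\ref{main}. Two observations do the work: (i) the hypothesis $A \geq 0$ in the corollary upgrades the conclusion $AV > 0$ of Theorem~\ref{main} to the doubly--positive condition automatically, and (ii) the ``cyclic multiplicity at most $N$'' quantifier in Theorem~\ref{main} allows one to realize \emph{any} multiplicity $m \geq N$ with a pure isometry. The substantive work lies in Theorem~\ref{main}, and the corollary is essentially bookkeeping.

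For the forward implication, suppose $A$ is positive semi--definite, injective, and not invertible, with cyclic multiplicity $N$, and fix $m \in \N \cup \{+\infty\}$ with $m \geq N$. Since $A$ then has cyclic multiplicity at most $m$, Theorem~\ref{main} applied with $m$ in place of $N$ furnishes a pure isometry $V$ of multiplicity $m$ satisfying $V^* A = AV$ and $AV > 0$. The Hankel identity forces $B := AV = V^* A$ to be self--adjoint, so the standing hypothesis $A \geq 0$ together with the strict inequality $AV > 0$ (which certainly implies $AV \geq 0$) exhibit $A$ as doubly--positive $V$--Hankel.

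For the converse, suppose there exists a pure isometry $V$ of multiplicity $m \geq N \geq 1$ with respect to which $A$ is $V$--Hankel, and assume for contradiction that $A$ is invertible. Solving the Hankel identity $V^* A = AV$ for $V^*$ yields
\[ V^* = A V A^{-1}, \]
so $V$ and $V^*$ are similar, forcing their kernels to have equal dimension. But $V$ is an isometry and hence injective, whereas the kernel of $V^*$ equals the wandering subspace $\ran{V}^\perp$, of dimension $m \geq 1$. This contradicts invertibility of $A$, and the main obstacle, if any, is simply keeping track of how the ``at most $N$'' condition permits the passage from multiplicity $N$ to arbitrary $m \geq N$.
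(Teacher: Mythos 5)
Your proposal is correct and matches the paper's treatment: Corollary \ref{DposHank} is obtained by applying Theorem \ref{main} with $m$ in place of $N$ (legitimate, since cyclic multiplicity at most $N$ implies at most $m$, and the paper's decomposition lemma produces a pure isometry of any multiplicity $M \geq N$), and then observing that $A \geq 0$ together with the self-adjoint operator $B = AV = V^*A$ satisfying $AV > 0$ (hence $AV \geq 0$ in the paper's convention) is exactly the doubly--positive condition. Your necessity argument via the similarity $V^* = AVA^{-1}$ and the kernel-dimension mismatch is a valid, slightly different one-liner from the paper's (which shows $\|A^{-1}h\| = \|A^{-1}V^{*k}h\| \to 0$ using purity of $V$), but this is a cosmetic difference, not a different route.
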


\begin{remark} \label{symmetry}
If $A=A^* \in \scr{L} (\cH )$ is injective, the polar decomposition of $A$ is $J|A|$, where $|A| := \sqrt{A^* A} >0$ and $J=J^*$ is a \emph{symmetry}, \emph{i.e.} a self-adjoint unitary operator. Indeed, if $f(t) =t$ on $[-\| A \| , \| A \| ]$ then $f(t) = -\chi _{[-\| A \| , 0 ]} (t) |t| + \chi _{[0, \| A \| ] } (t) |t|$, where $\chi _\Om$ denotes the characteristic function of a Borel set, $\Om \subseteq \R$. By the functional calculus, 
$$ A =  \underbrace{\left( -\chi _{[-\| A \| , 0 )} (A ) + \chi _{(0, \| A \| ] } (A) \right)} _{=: J} \cdot |A|, $$ and $J = J^* = J^{-1}$ is the difference of two spectral projections. If $J$ is a symmetry then $J^2 = I$ and the spectrum of $J$ is contained in $\{ \pm 1 \}$. Thus, $\cH$ decomposes as $\cH = \cH _+ \oplus \cH _-$ where $J h _\pm = \pm h _\pm$ for $h_\pm \in \cH _\pm$.  
\end{remark}

Our next result relates the condition that $A=A^*$ is injective, $V-$Hankel and $AV>0$ to double--positivity of $|A| := \sqrt{A^* A}$ with respect to $W := JV$ where $A=J|A|$ is the polar decomposition of $A$. 

\begin{thmA} \label{Hdpos}
Suppose that $A=A^* \in \scr{L} (\cH )$ is injective and Hankel with respect to a pure isometry, $V$, of multiplicity $N$ and that $AV >0$. If $A = J |A|$ is the polar decomposition of $A$, then $|A| >0$ is $W-$Hankel and doubly--positive where $W=JV$. If the Wold decomposition of $W$ is $W = W_0 \oplus U$ on $\cH = \cH _0 \oplus \cH '$ with pure part $W_0$ and unitary part $U$, then $W_0$ has multiplicity $N$, both $\cH _0, \cH '$ are $|A|-$invariant and either $U = I _{\cH '}$ or $\cH ' = \{ 0 \}$. 
\end{thmA}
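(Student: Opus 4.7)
The $W$-Hankel and doubly-positive properties of $|A|$ are immediate from the identity $A = J|A| = |A|J$, which follows from self-adjointness of $A$ applied to its polar decomposition (so $J$ and $|A|$ commute). Indeed,
\[ |A|W = |A|JV = AV =: B \qquad \text{and} \qquad W^*|A| = V^*J|A| = V^*A = AV = B, \]
so $|A|W = W^*|A| = B \geq 0$, and $|A| > 0$ since $A$ is injective. The wandering subspace of the pure part of $W$ is $\ker W^* = J\ker V^*$, of dimension $N$ because $J$ is unitary, so $W_0$ has multiplicity $N$.

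For the structural claims, I would decompose $|A|h = fh + gh$ for $h \in \cH'$, where $f: \cH' \to \cH_0$ and $g: \cH' \to \cH'$ is the compression of $|A|$ to $\cH'$, which is self-adjoint and positive semi-definite. Projecting the $W$-Hankel identity $|A|Wh = W^*|A|h$ (using $Wh = Uh \in \cH'$ and that $\cH_0, \cH'$ reduce $W$) onto each summand yields
\[ fU = W_0^*f \qquad \text{and} \qquad gU = U^*g. \]
The second relation gives $UgU = g$ (multiply on the left by $U$) and hence $gU^* = Ug$; combining, $g^2U = g(gU) = gU^*g = Ug^2$, so by functional calculus $g = \sqrt{g^2}$ also commutes with $U$. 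Together with $gU = U^*g$ this gives $(U - U^*)g = 0$.

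The positivity and injectivity of $B$ are then used twice. First, since $\ker g$ is $U$-invariant (as $gUh = U^*gh = 0$ whenever $gh = 0$), for $h \in \ker g$ we have $Bh = |A|Uh = fUh + gUh = fUh \in \cH_0$, so $\ip{Bh}{h} = 0$ by orthogonality of $\cH_0$ and $\cH'$; positivity and injectivity of $B$ then force $h = 0$, so $g$ is injective, $\overline{\ran g} = \cH'$, and $(U - U^*)g = 0$ becomes $U = U^*$ on $\cH'$ (in particular $U^2 = I$). Second, for $h \in \cH'$,
\[ 0 \leq \ip{Bh}{h} = \ip{|A|Uh}{h} = \ip{Uh}{gh} = \ip{Ug^{1/2}h}{g^{1/2}h}, \]
so by density of $\ran g^{1/2}$ in $\cH'$, $\ip{Uk}{k} \geq 0$ for all $k \in \cH'$. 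Combined with $U^2 = I$, any $-1$-eigenvector $k$ would give $\ip{Uk}{k} = -\|k\|^2 < 0$, contradiction; hence $U = I_{\cH'}$ (or $\cH' = \{0\}$).

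Finally, substituting $U = I$ into $fU = W_0^*f$ yields $f = W_0^*f$, so $\ran f \subseteq \ker(W_0^* - I)$. For a pure isometry $W_0$, any $W_0^*$-fixed vector $k$ satisfies $\|W_0^*k\| = \|k\|$, hence $k \in \ran W_0$, and iterating places $k \in \bigcap_n \ran W_0^n = \{0\}$; thus $\ker(W_0^* - I) = \{0\}$ and $f = 0$. Therefore $|A|\cH' \subseteq \cH'$, and self-adjointness of $|A|$ yields $|A|\cH_0 \subseteq \cH_0$ as well. The main obstacle will be the first application of positivity—identifying $\ker g$ as trivial via the orthogonality of $\cH_0$ and $\cH'$ in the decomposition of $Bh$—which renders the compression $g$ analytically tractable and unlocks the rest.
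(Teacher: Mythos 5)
Your proof is correct and follows essentially the same route as the paper: the identity $W^*|A| = V^*J|A| = V^*A = AV = |A|JV = |A|W$ coming from $J|A|=|A|J$, then the same structural analysis of the unitary part of the Wold decomposition (the square-root trick to make the compression of $|A|$ commute with $U$, double positivity to force $U=I_{\cH'}$, and purity of the pure part to kill the off-diagonal block). The only organizational difference is that the paper packages the structural claims as a standalone result (Proposition 1, on arbitrary positive, injective, doubly--positive $V$-Hankel operators) and cites it, whereas you reprove that content in-line for $|A|$ and $W$, with minor variations in the order of the deductions.
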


\begin{remark}
It does not seem possible to conclude that the isometry, $W$, in the statement of the above theorem is pure in general. If, for example, $S=M_z$ is the unilateral shift on $H^2$, define a symmetry, $J$, on $H^2$ by $J1=z$, $Jz =1$ and $Jz^k = z^k$ for all $k \geq 2$. Then, $W=JS$ obeys $W^* 1 = S^* J 1 = S^*  z = 1$, so that $W$ cannot be pure. In the proof of sufficiency in Theorem \ref{main}, given a self-adjoint, injective and non-invertible $A \in \scr{L} (\cH )$, we construct a pure isometry, $V$, so that $A$ is $V-$Hankel and $AV >0$ using a specific restriction of $A^{-1}$ and spectral theory. If $A =J |A|$ is the polar decomposition of $A$, where $J$ is a symmetry, it may be possible that the specific form of $W=JV$ implies that it is pure in this case. We leave this as an open problem for future research.
\end{remark}

Given a self-adjoint, non-invertible and bounded $A$ with trivial kernel, $\nbker A = \{ 0 \}$, we further show that the set of all isometries, $V$, so that $A$ is $V-$Hankel are in bijective correspondence with the set of all closed and symmetric restrictions of $A ^{-1}$:

\begin{thmA} \label{main2}
Let $A \in \scr{L} (\cH )$ be bounded, self-adjoint, injective and not invertible. Then $A$ is Hankel with respect to an isometry, $V$, with pure part, $V_0$ of multiplicity $N$, if and only if there is a closed, symmetric restriction, $\inv{A}_0$, of $A^{-1}$ with dense domain in $\cH$  and equal deficiency indices $(N, N)$,
$$ \nbdim \ran{ \inv{A} _0 \pm iI } ^\perp = N, $$ so that $\nbran \inv{A} _0 = \nbran V$. Namely, if $V$ is an isometry with pure part of multiplicity $N$, so that $A$ is $V-$Hankel, then $ \inv{A} _0 := A^{-1} | _{\nbran AV}$ is a closed, symmetric restriction of $A^{-1}$ with deficiency indices $(N,N)$, and if $\inv{A} _0$ is a closed, symmetric restriction of $A^{-1}$ with dense domain in $\cH$ and deficiency indices $(N,N)$, then the isometry, $V$, appearing in the polar decomposition of $\inv{A} _0$ has pure part of multiplicity $N$ and $A$ is $V-$Hankel. If $\inv{A} _0$ is simple and $N>0$ then $V$ is pure, and conversely, if $V$ is pure and if $V^* A = AV >0$, then $\inv{A} _0$ is simple. 
\end{thmA}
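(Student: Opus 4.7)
The plan is to describe an explicit map in each direction and to verify the claims through one core computation. Given an isometry $V$ with $A$ being $V$-Hankel, set $\inv{A}_0 := A^{-1}|_{\ran{AV}}$; conversely, given a closed symmetric restriction $\inv{A}_0$ of $A^{-1}$, take the isometry $V$ from the polar decomposition $\inv{A}_0 = V|\inv{A}_0|$. Both directions rest on the elementary identity
\[
(\inv{A}_0 \pm iI)\,AVg \;=\; Vg \pm iAVg \;=\; (I \pm iA)\,Vg, \qquad g \in \cH,
\]
which gives $\ran(\inv{A}_0 \pm iI) = (I \pm iA)\ran V$ and therefore
\[
\ran(\inv{A}_0 \pm iI)^\perp \;=\; (I \mp iA)^{-1}\ran V^\perp,
\]
a space of dimension $\dim \ran V^\perp$. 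By the Wold decomposition this is the multiplicity of the pure part of $V$, so the deficiency indices of $\inv{A}_0$ match the pure-part multiplicity $N$ throughout.

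For the forward direction, the Hankel identity $V^*A = AV$ makes $AV$ self-adjoint, hence injective with dense range (since $A$ is injective and $V$ is an isometry), so $\dom \inv{A}_0$ is dense. Symmetry of $\inv{A}_0$ follows from a two-line calculation in which both $\ip{\inv{A}_0(AVg_1)}{AVg_2}$ and $\ip{AVg_1}{\inv{A}_0(AVg_2)}$ reduce to $\ip{AV^2 g_1}{g_2}$, using that $AV^2 = (V^*)^2 A$ is self-adjoint by iterating the Hankel relation. Closedness is a standard limit argument relying on the closedness of $\ran V$, and $\ran \inv{A}_0 = V\cH = \ran V$ follows immediately from $\inv{A}_0(AVg) = Vg$.

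For the converse, let $T := \inv{A}_0$ with polar decomposition $T = V|T|$. Since $T$ is injective (as a restriction of the injective $A^{-1}$), so is $|T|$, and thus $V$ is a genuine isometry. The key observation is that for $h \in \dom T = \dom |T|$, $V|T|h = Th = A^{-1}h$, hence $AV|T|h = h$. Letting $k = |T|h$ range over the dense subspace $\ran|T|$, this reads $AVk = |T|^{-1}k$. Since $AV$ is bounded, $|T|^{-1}$ is bounded on its domain, so $|T|$ is bounded below, $\ran|T| = \cH$, and the identity $AV = |T|^{-1}$ extends globally. As $|T|^{-1}$ is positive self-adjoint, it follows that $V^*A = AV > 0$ and $A$ is $V$-Hankel. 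Moreover $\ran T = V\ran|T| = \ran V$, and the core identity above forces $\dim \ran V^\perp = N$, so the pure part of $V$ has multiplicity $N$.

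For the simplicity claims: if $V$ is pure and $AV > 0$, any reducing subspace $\cK$ of $T$ on which $T|_\cK$ were self-adjoint would, by Remark \ref{symmetry} applied to $T|_\cK$, force its polar factor $V|_\cK$ to be a symmetry on $\cK$ and hence unitary, contradicting purity of $V$; so $T$ is simple. For the converse direction, assuming $T$ is simple and $N > 0$, I would argue the contrapositive: if the unitary summand $\cH_U = \bigcap_n V^n \cH$ of $V$ is nontrivial, then writing $U = V|_{\cH_U}$ and $a = A|_{\cH_U}$, the restricted Hankel identity $UaU = a$ rearranges to $Ua^{-1} = a^{-1}U^{-1}$, and the polar decomposition unfolds as $T|_{\cH_U} = U(aU)^{-1} = Ua^{-1}U = a^{-1}$, exhibiting $\cH_U$ as a nontrivial self-adjoint reducing subspace of $T$ and contradicting simplicity. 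The main technical obstacle here is the $A$-invariance of $\cH_U$, which should follow by combining iterated Hankel ($AV^n = (V^*)^n A$) with positivity $AV \geq 0$ to deduce that the projection onto $\cH_U$ commutes with $A$.
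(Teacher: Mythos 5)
Your main equivalence is correct and follows a genuinely different, and in places more economical, route than the paper. For the direction starting from a closed symmetric restriction $T = \inv{A} _0$, the paper constructs the isometry by introducing the quadratic forms $q \leq q_0$, invoking the unbounded Riesz representation theorem to produce $|T| = B^{-1}$, and then applying the unbounded Douglas factorization lemma (Lemma \ref{modDFL}) to extract $V$; you instead read $V$ directly off the polar decomposition and observe that $AV|T|h = h$ forces $AV = |T|^{-1}$ to be bounded, positive and self-adjoint, which yields the Hankel relation and $AV>0$ in one stroke. Likewise your identity $(\inv{A} _0 \pm iI)AVg = (I \pm iA)Vg$, giving $\ran{\inv{A} _0 \pm iI} = (I \pm iA)\nbran V$ and hence deficiency indices equal to $\nbdim \nbran V ^\perp$, replaces the paper's Lemma \ref{Dspace} (the Kre\u{\i}n-type bijection $X = T(T-iI)^{-1}$) with an equivalent but more self-contained computation. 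These parts are sound, and arguably cleaner than the paper's.

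The gap is in your last paragraph, on the simplicity claims. For ``$\inv{A} _0$ simple and $N>0$ implies $V$ pure'' you correctly identify that everything hinges on showing that the unitary subspace $\cH _U = \bigcap _n V^n \cH$ reduces $A$, but you leave this unproved. This is not a routine step: it is exactly the content of the paper's Proposition \ref{pureHank}, whose proof requires (i) a functional calculus argument showing that the unitary part $U$ of $V$ commutes with the positive injective operator $A_2^2$, hence with $A_2$, forcing $U=U^*$ to be a symmetry, and (ii) the strong-operator convergence $V_0^{*2n} \to 0$ to kill the off-diagonal block $C$ of $A$ via $V_0^{*2n}C = C$. Note also that positivity enters through $B = AV>0$ (automatic in your polar-decomposition construction) rather than through $A$ itself, which is merely self-adjoint; the paper first block-diagonalizes $B$ and only then deduces $CJ=0$ for $A$. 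Your computation ``$UaU=a$, so $T$ restricts to $a^{-1}$ on $\cH _U$'' is fine once reduction is known, but without Proposition \ref{pureHank} or an equivalent the argument is incomplete. In the other direction (``$V$ pure and $AV>0$ implies simple'') you also skate over a point: the definition of simplicity hands you an invariant subspace $\cJ$ with $\inv{A} _0 | _{\scr{D}}$ self-adjoint in $\cJ$, not a priori a subspace reducing $\inv{A} _0$ and $|\inv{A} _0|$; you need a short argument (for instance, that $P_{\cJ}$ maps $\nbdom \inv{A} _0$ into $\scr{D}$ and commutes with $\inv{A} _0$, so that the polar decomposition splits as a direct sum) before you may assert that the polar factor restricts to a symmetry on $\cJ$. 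The paper sidesteps this by showing directly that $\cJ \subseteq \nbran V$ is $V^*$-invariant and that $V^*|_{\cJ}$ is isometric. Both gaps are fillable, but as written the two simplicity assertions are not proved.
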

In the above statement, recall that the \emph{deficiency indices}, $(n_+, n_-)$ of a closed, symmetric operator, $T$, are defined as 
$$ n _\pm := \nbdim \ran{ T \pm iI } ^\perp \in \N \cup \{ 0 \} \cup \{ + \infty \}, $$ and further recall that a symmetric operator, $T$, is called \emph{simple} if it has no self-adjoint restriction, $T ' = T | _{\scr{D}}$, to a dense domain, $\scr{D}$, in a non-trivial closed subspace, $\cJ$, of $\cH$ so that $\cJ$ is invariant for $T$ in the sense that $T \scr{D} \subseteq \cJ$  \cite[Sections 78--81]{Glazman}, \cite[Chapter X.1]{RnS2}, \cite{vN}. Here, if $T_k : \nbdom T_k \subseteq \cJ _k \rightarrow \cH _k$, $k=1,2$, are two linear transformations with (not necessarily dense) domains $\nbdom T_k \subseteq \cJ _k$, where $\cJ _k , \cH _k$ are Hilbert spaces so that $\cJ _1 \subseteq \cJ _2$, we say that $T_1$ is a \emph{restriction} of $T_2$, or that $T_2$ is an \emph{extension} of $T_1$ and write $T_1 \subseteq T_2$ if $\nbdom T_1 \subseteq \nbdom T_2$ and $T_1 = T_2 | _{\nbdom T_1}$.

Corollary \ref{DposHank} is a special case of Theorem \ref{main} which subsumes Theorem 1 of \cite{Hankpos}. Theorem \ref{main} also has significant overlap with Theorem 1 and Theorem 2 of \cite{Hankinv}, which solve the inverse spectral problem for self-adjoint Hankel (and block Hankel) operators. That is, the main result of \cite{Hankinv} is \cite[Theorem 1]{Hankinv}, which gives necessary and sufficient conditions for a self-adjoint operator to be unitarily equivalent to a Hankel operator acting on one copy of Hardy space, $H^2$. Theorem 2 of \cite{Hankinv} then gives necessary and sufficient conditions for a self-adjoint operator, $A$, to be unitarily equivalent to a `block' Hankel operator acting on vector--valued $H^2$, \emph{i.e.} for $A$ to be $V-$Hankel with respect to some pure isometry, $V$. This latter result is a straightforward consequence of \cite[Theorem 1]{Hankinv} and spectral theory. In order to describe the results of \cite{Hankinv} and their relationship to ours, recall the direct integral form of the spectral theorem for a self-adjoint operator $A \in \scr{L} (\cH)$: Given $A=A^*$, $A$ is unitarily equivalent to multiplication by the independent variable acting on a direct integral of Hilbert spaces. That is, there is a positive, finite Borel measure, $\mu$, on $\R$, and a `direct integral of Hilbert spaces', 
$$ \cH _{\oplus _\mu}  = \int _{\R} ^\oplus \cH (t) \, \mu (dt ), $$ which is a Hilbert space consisting of measurable functions, $f$, with $f(t) \in \cH (t)$, $\cH (t)$ is a Hilbert space for each $t \in \R$, $\cH (t) \neq \{ 0 \} \ \mu - a.e.$ and the inner product is
$$ \ip{f}{g} := \int _{-\infty} ^\infty \ip{f(t)}{g(t)}_{\cH (t)} \mu (dt). $$ (We will not make use of this construction in this paper and so we refer the reader to \cite{Hankinv} and \cite{BS} for precise details.) The self-adjoint operator $A$ is then unitarily equivalent to $M_t$, $(M_t f) (s) = s f(s)$ acting on its maximal domain in $\cH _{\oplus _\mu}$.  The \emph{spectral multiplicity function}, $\nu _A$, of $A$ is then defined $\mu -a.e.$ as $\nu _A (t) = \nbdim \cH (t)$. Theorem 1 of \cite{Hankinv} then states that if $A=A^* \in \scr{L} (\cH )$ has such a spectral representation $M_t$ on $\cH _{\oplus _\mu}$, then $A$ is unitarily equivalent to a Hankel operator if and only if:
\bi
    \item[(a)] either $A$ is injective or $\nbdim \nbker A = +\infty$,
    \item[(b)] $A$ is not invertible, and
    \item[(c)] $| \nu _A (t) - \nu _A (-t) | \leq 2, \ \mu_{ac}-a.e.$ and $|\nu _A (t) - \nu _A (-t) | \leq 1, \ \mu_s -a.e.$
\ei
Our main result, Theorem \ref{main}, also provides a sufficient condition for a self-adjoint operator $A \in \scr{L} (\cH )$ to be unitarily equivalent to a Hankel operator, but this is a weaker result than that of Theorem 1 in \cite{Hankinv}, even in the case where $A$ in injective.  Namely, Theorem \ref{main} implies that a sufficent condition for $A \in \scr{L} (\cH )$ to be unitarily equivalent to a self-adjoint Hankel operator acting on one copy of $H^2$ is that $A =A^*$ be multiplicity--free, injective and not invertible. As described in, \emph{e.g.} \cite[Section 1]{Hankpos}, a bounded self-adjoint operator, $A$, can be decomposed as the direct sum of $N \in \N \cup \{ + \infty \}$ self-adjoint operators with simple spectra, \emph{i.e.} $A$ has cyclic multiplicity at most $N$, if and only if the essential supremum of its spectral multiplicity function does not exceed $N$. Hence any such $A$ satisfying the sufficiency condition from Theorem \ref{main} described above also satisfies the conditions (a)--(c) of \cite[Theorem 1]{Hankinv} listed above, but this sufficiency condition from Theorem \ref{main} is not necessary. However, while proof of necessity of the conditions (a)--(c) in \cite[Theorem 1]{Hankinv} is straightforward, proof of sufficiency is lengthy and involved, and proof of our weaker sufficiency condition in Theorem \ref{main} is relatively short. Moreover, Theorem \ref{main}, Corollary \ref{DposHank} and Theorem \ref{Hdpos} contain new information and results not implied by those of \cite{Hankinv,Hankpos} or other previous work, and our Theorem \ref{main2} is completely new.

Our proof of Theorem \ref{main} is, in our opinion, novel and uses a variety of interesting techniques including the theory of unbounded symmetric operators and their self-adjoint extensions, the theory of unbounded sesquilinear forms and a modified version of the `unbounded Douglas factorization lemma', Lemma \ref{modDFL}. In particular, our proof of Theorem \ref{main} uses entirely different methods than the dynamical systems approach of \cite{Hankinv}, or the methods of \cite{Hankpos}.

\section{Preliminaries}

\subsection{Necessity}
 
The proof of necessity in Theorem \ref{main} is elementary: 
 
\begin{proof}{ (Necessity in Theorem \ref{main})}
Suppose that $V$ is a pure isometry and that $A=A^*$ is $V-$Hankel and invertible so that $A^{-1}$ is bounded and self-adjoint. Then $V^* A = A V$ implies that 
$$ A^{-1} V^* = V A^{-1}. $$ Hence, for any $h \in \cH$ and $k \in \N$,
\ba \| A^{-1} h \| & = & \| V^k A^{-1} h \| \nn \\
& = & \| A^{-1} V^{*k} h \| \nn \\
& \leq &  \| A ^{-1} \| \, \| V^{*k} h \|  \rightarrow 0. \nn \ea This proves that $A^{-1} \equiv 0$, a contradiction.
\end{proof}
In the proof above we have employed the notation $V^{*k} := (V^*) ^k$.

\subsection{Spectral Multiplicity}

As in \cite{Hankinv} and \cite[Proposition 2]{Hankpos}, proof of sufficiency in Theorem \ref{main} can be reduced to the case where $A$ is multiplicity--free using spectral theory. 

\begin{lemma}
Let $A \in \scr{L} (\cH )$ be self-adjoint, injective and not invertible with cyclic multiplicity $N \in \N \cup \{ + \infty \}$. Then given any $M \geq N$, $M \in \N \cup \{ + \infty \}$, $A$ can be decomposed as $A = \oplus _{j=1} ^M A_j$ where each $A_j = A_j^*$ is also injective but not invertible. 
\end{lemma}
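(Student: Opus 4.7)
The plan is to combine the cyclic decomposition of $A$ with a spectral--theoretic refinement of a single cyclic summand that contains $0$ in its spectrum. First, by definition of cyclic multiplicity, write $A = \bigoplus_{j=1}^N B_j$ on $\cH = \bigoplus_{j=1}^N \cH_j$, where each $B_j = B_j^*$ is multiplicity--free. Since $A$ is not invertible, $0 \in \sigma(A) = \bigcup_j \sigma(B_j)$, so there is an index $j_0$ with $0 \in \sigma(B_{j_0})$, and $B_{j_0}$ is injective because $A$ is. The key subtlety is that the remaining summands $B_j$, $j\neq j_0$, may well be \emph{invertible}, so one cannot simply split $B_{j_0}$ and leave the other cyclic pieces untouched; a piece of $B_{j_0}$'s non-invertibility has to be distributed to every summand of the final decomposition.

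Next I would split $B_{j_0}$ into $M$ injective, non-invertible pieces. By the spectral theorem for multiplicity--free self-adjoint operators, $B_{j_0}$ is unitarily equivalent to multiplication by $t$ on $L^2(\R,\mu)$ for some finite positive Borel measure $\mu$ with $0 \in \mathrm{supp}(\mu)$ and $\mu(\{0\})=0$. Since $\mu\bigl((-\varepsilon,\varepsilon)\setminus\{0\}\bigr) > 0$ for every $\varepsilon > 0$, one inductively chooses pairwise disjoint intervals $I_n \subseteq (-1/n, 1/n)\setminus\{0\}$ with $\mu(I_n) > 0$, ensuring at each step that $I_n$ lies inside a small enough neighborhood of $0$ to avoid the previously chosen $I_1,\dots, I_{n-1}$ (each of which is bounded away from $0$). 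Partitioning $\N$ into $M$ pairwise disjoint infinite subsets $S_1,\dots, S_M$, set $\Omega_k := \bigcup_{n\in S_k} I_n$ for $k \geq 2$ and $\Omega_1 := \R \setminus \bigcup_{k\geq 2} \Omega_k$. Each $\Omega_k$ contains intervals $I_n$, $n\in S_k$, that accumulate at $0$ and have positive measure, so $0 \in \mathrm{supp}(\mu|_{\Omega_k})$. The orthogonal decomposition $L^2(\R,\mu) = \bigoplus_{k=1}^M L^2(\Omega_k,\mu|_{\Omega_k})$ then identifies $B_{j_0}$ with $\bigoplus_{k=1}^M C_k$, where $C_k$ acts as multiplication by $t$ on $L^2(\Omega_k,\mu|_{\Omega_k})$, has spectrum $\mathrm{supp}(\mu|_{\Omega_k}) \ni 0$, and has trivial kernel. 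Hence each $C_k$ is self-adjoint, injective, and not invertible.

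Finally, I would reassemble into exactly $M$ summands of $A$ by attaching one $C_k$ to each of the original cyclic pieces that survives, guaranteeing non-invertibility of every summand:
\[ A_j := B_j \oplus C_j \ \text{ for }\ j \in \{1,\dots, N\}\setminus\{j_0\}, \qquad A_{j_0} := C_{j_0}, \qquad A_j := C_j \ \text{ for }\ N < j \leq M. \]
Each $A_j$ is self-adjoint, injective as a direct sum of injective operators, and not invertible since it contains some $C_k$ as a direct summand with $0 \in \sigma(C_k)$. The total underlying Hilbert space is $\bigoplus_{j\neq j_0}\cH_j \oplus \bigoplus_{k=1}^M L^2(\Omega_k,\mu|_{\Omega_k}) = \bigoplus_{j\neq j_0}\cH_j \oplus \cH_{j_0} = \cH$, so $A = \bigoplus_{j=1}^M A_j$ as required. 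I expect the main obstacle to be Step~2: constructing the Borel partition so that every restricted measure retains $0$ in its support. The naive approach of cutting with spectral projections $E_A\bigl((-1/n,1/n)\bigr)$ fails because the resulting annular pieces are all invertible, and the accumulation-of-disjoint-intervals argument above is what salvages this; everything else is straightforward bookkeeping.
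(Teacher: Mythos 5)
Your proof is correct for the lemma as stated and follows essentially the same route as the paper's: split off one cyclic summand containing $0$ in its spectrum, carve its spectral measure into $M$ Borel pieces each accumulating at $0$ with positive measure, and graft one piece onto each surviving summand. The one substantive difference is that the paper additionally chooses each piece $C_j$ with $\sigma(C_j)\subseteq(-\delta_j/2,\delta_j/2)$, disjoint from $\sigma(B_j)$ for the invertible $B_j$, so that every $A_j=B_j\oplus C_j$ remains \emph{multiplicity--free}; your $\Omega_k$ (especially $\Omega_1=\R\setminus\bigcup_{k\geq 2}\Omega_k$) need not have this property, and while multiplicity--freeness is not part of the lemma's statement, it is exactly what the paper uses in the paragraph after the lemma to reduce Theorem A to the cyclic case, so your construction would need that small extra refinement to serve the same purpose.
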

\begin{proof}
This is a straightforward application of spectral theory. Since we assume that $A$ has cyclic multiplcity $N$, 
$$ A = \bigoplus _{j=1} ^N B_j \quad \mbox{acting on} \quad \cH = \bigoplus _{j=1} ^N \cH _j $$ where each $B_j =B_j ^*$ is injective and multiplicity--free. Since $A$ is not invertible, at least one of the $B_j$ must be non-invertible and we assume, without loss in generality, that each $B_j$ for $1 \leq j \leq m$ is non-invertible for some fixed $m \in \{ 1 , \cdots , N \}$. Since each $B_j$ is invertible for $m+1 \leq j \leq N$, for each such $j$ there is a $\delta _j >0$ so that the spectrum, $\sigma (B _j )$, of $B_j$ is contained in $(-\infty , -\delta _j] \cup [ \delta _j , +\infty )$. The basic idea is to decompose $\cH _m$ and $B_m$ into the orthogonal direct sum of $M-m+1$ `pieces', 
$$ \cH _m = \bigoplus _{j=m} ^{M} \cJ _j \quad \mbox{and} \quad B_m = \bigoplus _{j=m} ^{M} C_j, $$ where each $C_j=C_j^*$ is self-adjoint, injective but not invertible, and so that for $m+1 \leq j \leq N$, the spectrum of $C_j$ is contained in $(-\delta _j, \delta _j )$. Then setting $A_j := B_j$ for $1\leq j \leq m-1$, $A_m := C_m$ acting on $\cJ _m$, $A_j := B_j \oplus C_j$ acting on $\cH _j \oplus \cJ _j$ for $m+1 \leq j \leq N$ and $A_j := C_j$ for $N+1 \leq j \leq M$ will yield the desired decomposition.   

Now consider the interval $(-1, 1)$. Since $B_m$ is injective but not invertible, $0$ is not an isolated point in the spectrum of $B_m$ and each of the open sets 
$(-1/n , 0 ) \cup (0, 1/n)$, for $n \in \N$, have non-empty intersection with the spectrum of $B_m$. It follows that we can construct an infinite sequence $(\la _k ) _{k=1} ^\infty$, of points in the intersection of $(-1, 1)$ with the spectrum, $\sigma (B_m)$ of $B_m$, so that $\la _k \neq 0$, $\la _k \neq \la _j$ for any $j \neq k$, $j, k \in \N$ and $\la _k \rightarrow 0$. Moreover, for each $k$, there is an $\eps _k >0$ so that the intervals $I_k := (\la _k -\eps _k , \la _k + \eps _k ) \subseteq (-1, 1)$ are disjoint, \emph{i.e.} $I_k \cap I_j = \emptyset$ for $j\neq k$. Divide the union of the these intervals, $\La := \cup _{j=1} ^\infty (\la _k - \eps _k , \la _k + \eps _k )$ into $M  -m +1$ sets, $\La _j$, $m \leq j \leq M$ with trivial pair-wise intersections, $\La _j \cap \La _k = \emptyset$ for $j \neq k$, $\La = \cup _{j=m} ^{M} \La _j$, so that $0$ is in the closure of each $\La _j$. Hence, by construction, $0$ will also belong to the closure of $\La _j \cap \sigma (B_m)$. One could choose, for example, $\La _m := \cup _{j=1} ^\infty (\la _{2j} - \eps _{2j}  , \la _{2j} + \eps _{2j}  )$, then choose $\La _{m+1} = \cup _{j=1} ^\infty (\la _{4j-1} - \eps _{4j -1}  , \la _{4j-1} + \eps _{4j-1} )$, and so on. For each $m+1 \leq j \leq N$, $B_j$ is invertible so that there is a $\delta _j >0$ obeying $\sigma (B_j) \subseteq \R \sm (-\delta _j,  \delta _j)$. We then define $\Om _j := \La _{j} \cap (-\delta _j /2 , \delta _j /2)$ for $m+1 \leq j \leq N$, and $\Om _j := \La _j$ for $j=m$ and all $N+1 \leq j \leq M$.  

Next, for any $m+1 \leq j \leq N$, we set
$$ C_j := B_m|_{\cJ _j}, \ \cJ _j := \nbran \chi _{\Om _j} (B_m ) \quad \mbox{and} \quad A_j := B_j \oplus C_j \quad \mbox{acting on} \quad \cH _j \oplus \cJ _j. $$  Here, $\chi _{\Om _j} $ denotes the characteristic function of the Borel set $\Om _j$ and the \emph{spectral projection}, $\chi _{\Om _j} (B_m )$, is a self-adjoint projection defined by the functional calculus for self-adjoint operators. The $\cJ _j$ are pairwise orthogonal since $\Om _j \cap \Om _k = \emptyset$ for $j\neq k$. Hence, $\sigma (B_j) \subseteq (-\infty , - \delta _j ] \cup [\delta _j , +\infty)$ while $\sigma (C_j) \subseteq [- \delta _j /2 , \delta _j /2]$, $0 \in \ov{\Om _j \cap \sigma (C_j )} \subseteq \sigma (C_j)$, and it follows that for $m+1 \leq j \leq N$ each $A_j = B_j \oplus C_j$ is self-adjoint, injective, not invertible and multiplicity--free. Indeed, if $x_j$ is a cyclic vector for each $B_j$, then it is readily verified that $x_j \oplus \chi_{\Om _j} (B_m) x_m$ is cyclic for $A_j$, $m+1 \leq j \leq N$. For $1 \leq j \leq m-1$ we then set $A_j:=B_j$ and for $N+1 \leq j \leq M$ we set $A_j := C_j := B_m | _{\cJ _j}$ acting on $\cJ _j := \nbran \chi _{\Om _j} (B_m)$. Each of these is, again, self-adjoint, injective, not invertible and multiplicity--free. Finally, if $\Om := \cup _{j=m+1} ^M \Om _j$, then 
$\La _m = \Om _m \subseteq \R \sm \Om$ so that if we set $A_m := C_m := B_m | _{\cJ _m}$ where $\cJ _m := \nbran \chi _{\R \sm \Om} (B_m)$, then $B_m = \oplus _{j=m} ^M C_j$ and  $$ A = \bigoplus _{j=1} ^M A_j, $$ is the desired orthogonal direct sum decomposition of $A$ so that each $A_j=A_j^*$ is bounded, multiplicity--free, injective and not invertible.
\end{proof}

It follows that to prove sufficiency in Theorem \ref{main}, it suffices to consider the case where $A$ is multiplicity--free. That is, if $A=A^* \in \scr{L} (\cH)$ satisfies the hypotheses of Theorem \ref{main}, is non-invertible and has cyclic multiplicity $N$, then by the above lemma, for any $M \geq N \in \N \cup \{ + \infty \}$ we can decompose $A = \bigoplus _{j=1} ^M A_j$ where each $A_j$ is bounded, self-adjoint, multiplicity--free, injective and not invertible. Hence, if sufficiency in Theorem \ref{main} can be established in the multiplicity--free case, then applying this to each direct summand, $A_j$, yields a pure isometry, $V_j$, of multiplicity $1$ with the desired properties (i) and (ii). Then $V := \bigoplus _{j=1} ^M V_j$ will be a pure isometry of multiplicity $M$ satisfying conditions (i) and (ii) of Theorem \ref{main}.

\subsection{A simple symmetric operator} \label{simsym}

Assuming $A$ is self--adjoint, multiplicity--free, injective and not invertible, consider $A^{-1}$. This is a self--adjoint, multiplicity--free, densely--defined (and unbounded) operator on $\nbdom A^{-1} = \nbran A$.  Let $P (\cdot ) : \mr{Bor} (\R ) \rightarrow \mr{Proj} (\cH )$ denote the \emph{projection--valued measure} or \emph{spectral measure} of $A^{-1}$, where $\mr{Bor} (\R)$ denotes the $\sigma-$algebra of Borel subsets of $\R$ and $\mr{Proj} (\cH )$ denotes the lattice of self--adjoint projection operators in $\scr{L} (\cH)$. Let $x \in \cH$ be a cyclic vector for $A$ and define the (scalar--valued) positive Borel measure:
$$ \mu (\Om ) := \int _{-\infty} ^\infty (1+t^2) \ip{x}{P(dt) x}_{\cH}; \quad \quad \Om \in \mr{Bor} (\R ). $$ Observe that 
$$ \mu ( \R ) = + \infty, $$ and that $A ^{-1}$ is unitarily equivalent to the self-adjoint multiplication operator $M_t$, $(M_t f) (t) = t f(t)$ acting on $L^2 (\mu ) := L^2 ( \R , \mu )$. Further note that, by construction, $\mu$ obeys the \emph{Herglotz condition},
$$ \int _{-\infty } ^\infty \frac{1}{1+t^2} \mu (dt) < + \infty. $$ 
Identify $A ^{-1}$ with $M_t$ acting on its maximal, dense domain in $L^2 (\mu )$, consider the linear manifold,
$$ \scr{D} _0 := \left\{ h \in \nbdom M_t \left| \ \int _{-\infty} ^\infty h(t) \mu (dt) =0  \right. \right\}, $$ 
and set $\inv{A} _0 := A ^{-1} | _{\scr{D} _0}$. By \cite[Proposition 3.5, Lemma 3.6]{AMR} and the discussion following their proofs, $\inv{A} _0 $ is a closed, injective and simple symmetric operator with deficiency indices $(1,1)$, and $\nbdom \inv{A} _0 = \scr{D} _0$ is dense in $\cH$.  We refer the reader to \cite[Sections 78--81]{Glazman} and \cite[Chapter X.1]{RnS2} for an introduction to the theory of unbounded symmetric operators and their self-adjoint extensions.

\subsection{Positive semi-definite quadratic forms}

A sesquilinear (or quadratic) form, $q : \nbdom q \times \nbdom q \rightarrow \C$, where the \emph{form domain} of $q$, $\nbdom q \subseteq \mc{H}$, is dense in a Hilbert space, $\mc{H}$, is \emph{positive semi-definite} if $q (h , h ) \geq 0$ for all $h \in \nbdom q$. (Technically, the domain of $q$ is $\nbdom q \times \nbdom q$ where $\nbdom q$ is the form domain of $q$.) Given such a positive semi-definite quadratic form, $q$, with dense form domain $\nbdom q \subseteq \cH$, we say that $q$ is \emph{closed} if its form domain, $\nbdom q$, is complete with respect to the norm induced by the inner product,
$$ q(x,y) + \ip{x}{y} _\cH. $$ That is, $q$ is closed if its form domain is a Hilbert space with respect to the above inner product. A positive semi-definite quadratic form is \emph{closeable} if it has a closed \emph{extension}. Here, as for linear operators, given two positive semi-definite quadratic forms $q, q'$ with dense form domains $\nbdom q \subseteq \cJ$ and $\nbdom q' \subseteq \cH$, where $\cJ$ is a closed subspace of $\cH$, we say that $q$ is a \emph{restriction} of $q'$, or that $q'$ is an \emph{extension} of $q$, and we write $q \subseteq q'$, if $\nbdom q \subseteq \nbdom q'$ and $q = q' | _{\nbdom q \times \nbdom q}$.  Any closeable, positive semi-definite quadratic form with dense form domain in $\cH$ has a minimal closed extension, $\ov{q}$, with dense form domain $\nbdom \ov{q} \subseteq \cH$. Standard references for the theory of unbounded sesquilinear forms include \cite[Chapter VIII.6]{RnS1} and \cite[Chapter 6]{Kato}.

A positive semi-definite quadratic form $q$, with dense form domain in a Hilbert space, $\cH$, is closed if and only if there is a unique self-adjoint and positive semi-definite operator, $A$, with dense domain in $\cH$ so that $\nbdom q = \mr{Dom}\, \sqrt{A}$ and
$$ q (g ,h ) = q_A (g, h) := \ip{ \sqrt{A} g }{\sqrt{A} h} _\cH; \quad \quad g,h \in \nbdom q, $$
\cite[Chapter VI, Theorem 2.1, Theorem 2.23]{Kato}.  This can be viewed as an extension of the Riesz representation lemma for bounded positive semi-definite quadratic forms. The following definition is taken from \cite[Chapter VI]{Kato}.

\begin{defn}{ (A partial order on positive semi-definite forms)} \label{porder}
If $q_1, q_2$ are densely--defined and positive semi-definite quadratic forms in a Hilbert space, $\cH$, we say $q_1 \leq q_2$ if
\bn
    \item $\nbdom q_2 \subseteq \nbdom q_1$, and
    \item $q_1 (x,x) \leq q_2 (x,x)$ for all $x \in \nbdom q_2$.
\en
If $A_1 , A_2$ are self-adjoint and positive semi-definite operators with dense domains in a Hilbert space, $\cH$, we say that $A_1 \leq A_2$ if $q_{A_1} \leq q _{A_2}$. That is, $A_1 \leq A_2$ if
\bn
    \item $\nbdom \sqrt{A_2} = \nbdom q_{A_2} \subseteq \nbdom q_{A_1} = \nbdom \sqrt{A_1}$, and 
    \item $ \| \sqrt{A} _1 x \| ^2 \leq \| \sqrt{A_2} x \| ^2$ for all $x \in \nbdom \sqrt{A_2}$.
\en
\end{defn}

\begin{thm}{ (\cite[Chapter VI, Theorem 2.21]{Kato}, \cite[Proposition 1.1]{Simon1})} \label{invorder}
Self-adjoint, densely--defined and positive semi-definite operators obey $T_1 \leq T_2$ if and only if $(t I + T_2) ^{-1} \leq (t I +T_1 ) ^{-1}$ for any $t >0$.
\end{thm}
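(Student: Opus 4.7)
The plan is to pair two variational identities connecting the bounded positive resolvent $R_t(T) := (tI + T)^{-1}$, $t > 0$, with the closed form $q_T$ of a positive self-adjoint $T$. The first is a Legendre-type identity: for every $x \in \cH$ and $t > 0$,
$$ \ip{R_t(T) x}{x} = \sup_{y \in \nbdom \sqrt{T}} \Big\{ 2 \nbre \ip{x}{y} - t \|y\|^2 - q_T(y,y) \Big\}. $$
This holds because the strictly concave, form-continuous functional $y \mapsto 2 \nbre \ip{x}{y} - t\|y\|^2 - q_T(y,y)$ is maximized exactly at $y_\ast = R_t(T) x$, as characterized by the Euler--Lagrange form identity $\ip{x}{h} = t\ip{y_\ast}{h} + q_T(y_\ast, h)$ for every $h \in \nbdom \sqrt{T}$; substituting $y_\ast$ back yields $\ip{R_t(T)x}{x}$. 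The second is a recovery formula, obtained by applying the spectral theorem to $T$ and using that $\lambda \mapsto t\lambda/(t+\lambda)$ increases to $\lambda$ on $[0,\infty)$ as $t \uparrow +\infty$; monotone convergence then gives
$$ q_T(x,x) = \lim_{t \to +\infty} t \ip{(I - t R_t(T)) x}{x} = \lim_{t \to +\infty} \ip{tT R_t(T) x}{x}, $$
where the monotone limit equals $+\infty$ precisely when $x \notin \nbdom \sqrt{T}$.

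For the forward direction, assume $T_1 \leq T_2$, so that $\nbdom \sqrt{T_2} \subseteq \nbdom \sqrt{T_1}$ and $q_{T_1}(y,y) \leq q_{T_2}(y,y)$ on $\nbdom \sqrt{T_2}$. Applying the Legendre identity to each $T_j$, the supremum defining $\ip{R_t(T_2) x}{x}$ runs over the smaller set $\nbdom \sqrt{T_2}$ with the larger penalty $q_{T_2}$ subtracted. Replacing the penalty $q_{T_2}$ by the pointwise-smaller $q_{T_1}$ on $\nbdom \sqrt{T_2}$ can only increase the supremum, and subsequently enlarging the domain to $\nbdom \sqrt{T_1}$ can only increase it further. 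Hence $\ip{R_t(T_2) x}{x} \leq \ip{R_t(T_1) x}{x}$ for every $x \in \cH$, which is exactly $R_t(T_2) \leq R_t(T_1)$ as bounded positive operators.

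For the converse, assume $R_t(T_2) \leq R_t(T_1)$ for every $t > 0$. Since $t > 0$, this yields $t(I - tR_t(T_1)) \leq t(I - tR_t(T_2))$ for every $t > 0$. Passing $t \to +\infty$ and invoking the recovery formula on each side produces
$$ q_{T_1}(x,x) \leq q_{T_2}(x,x) \quad \text{in } [0,+\infty], \qquad x \in \cH. $$
In particular, $q_{T_2}(x,x) < +\infty$ forces $q_{T_1}(x,x) < +\infty$, so $\nbdom \sqrt{T_2} \subseteq \nbdom \sqrt{T_1}$ and the pointwise form inequality survives on the smaller domain; this is the definition of $T_1 \leq T_2$.

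The one delicate point is the rigorous justification of the Legendre identity on the form domain $\nbdom \sqrt{T}$ rather than on $\nbdom T$; this is handled by observing that $R_t(T)$ actually maps $\cH$ into $\nbdom T \subseteq \nbdom \sqrt{T}$, so the maximizer $y_\ast$ is a legitimate element of the form domain, and the form-continuity of the functional allows the identity to be read off from the Euler--Lagrange equation alone. Every remaining step reduces to monotone convergence against the spectral measure of $T$, so no further obstacle arises.
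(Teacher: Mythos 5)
The paper does not prove this statement at all: it is quoted verbatim as a known result, with the proof deferred to the cited references (Kato, Ch.~VI, Thm.~2.21 and Simon, Prop.~1.1). Your argument is a correct, self-contained proof, and it is essentially the standard one from those sources: the variational (Legendre-type) formula $\ip{(tI+T)^{-1}x}{x} = \sup_{y \in \mathrm{Dom}\,\sqrt{T}}\{2\,\mathrm{Re}\ip{x}{y} - t\|y\|^2 - q_T(y,y)\}$ handles the forward implication, and the monotone recovery formula $q_T(x,x) = \lim_{t\to\infty} t\ip{(I - t(tI+T)^{-1})x}{x}$ (valid in $[0,+\infty]$, finite exactly on $\mathrm{Dom}\,\sqrt{T}$) handles the converse. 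Both identities check out: the expansion $F(y_*+h) = F(y_*) - t\|h\|^2 - q_T(h,h)$ with $y_* = (tI+T)^{-1}x \in \mathrm{Dom}\,T$ verifies the first directly, and the pointwise monotone increase of $\lambda \mapsto t\lambda/(t+\lambda)$ to $\lambda$ justifies the second via the spectral theorem. The two-step comparison of suprema (smaller penalty, then larger domain) in the forward direction is also sound, and the converse correctly extracts both the domain inclusion and the form inequality from the extended-real-valued limit. No gaps.
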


The following lemma is a variant of \cite[Theorem 2]{DFL}:
\begin{lemma}[Unbounded Douglas factorization lemma] \label{modDFL}
Let $A, B$ be closed linear operators densely--defined in a separable Hilbert space, $\mc{H}$. If $AA^* \leq B B ^*$ (in the quadratic form sense of Definition \ref{porder}), then there is a contraction, $C$ so that $A \subseteq B C$.
\end{lemma}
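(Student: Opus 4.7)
The plan is to adapt the classical Douglas factorization argument to the unbounded setting by transferring the entire construction to the adjoints $A^*$ and $B^*$, which are themselves closed and densely defined. The key preliminary step is to translate the form inequality $AA^* \leq BB^*$ of Definition \ref{porder} into a pointwise inequality of norms on $\nbdom B^*$. For any closed, densely defined operator $T$, one has $\nbdom T^* = \nbdom |T^*|$ and $\| T^* y \| = \| \, |T^*| y \, \|$ where $|T^*| := \sqrt{T T^*}$; this can be verified by showing that $\nbdom T T^*$ is a common core for both $T^*$ and $|T^*|$ on which these norms clearly agree (a standard fact stemming from the polar decomposition of $T^*$). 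Applied to $T = A$ and $T = B$, this converts the form hypothesis into the concrete pair of conditions $\nbdom B^* \subseteq \nbdom A^*$ and $\| A^* y \| \leq \| B^* y \|$ for every $y \in \nbdom B^*$.

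With this estimate in hand I would mimic the bounded Douglas construction, but at the level of adjoints. Define $D_0 : \nbran B^* \to \mc{H}$ by $D_0 (B^* y) := A^* y$ for $y \in \nbdom B^*$. The inequality $\| A^* y \| \leq \| B^* y \|$ shows $D_0$ is well-defined (if $B^* y = 0$ then $A^* y = 0$) and contractive. Extending $D_0$ by continuity to $\overline{\nbran B^*}$, and by zero on the orthogonal complement $\overline{\nbran B^*}^{\perp} = \nbker B$ (using $B^{**} = B$ since $B$ is closed), produces a bounded contraction $D \in \scr{L}(\mc{H})$ satisfying $D B^* y = A^* y$ for every $y \in \nbdom B^*$. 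Equivalently, $D B^* \subseteq A^*$ as operators.

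To finish, take adjoints and set $C := D^* \in \scr{L}(\mc{H})$, which is again a contraction. The inclusion $D B^* \subseteq A^*$ yields $A = A^{**} \subseteq (D B^*)^*$, and because $D$ is bounded while $B^*$ is closed and densely defined, the standard identity $(D B^*)^* = (B^*)^* D^* = B C$ applies, giving $A \subseteq B C$ as required. The main technical hurdle I anticipate is the initial translation of the form-sense hypothesis: one must carefully verify that $\nbdom T^* = \nbdom \sqrt{T T^*}$ together with $\| T^* y \| = \| \sqrt{T T^*} \, y \|$, since this is the bridge from the abstract quadratic-form inequality to the concrete adjoint estimate that drives the construction. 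The closing adjoint-of-a-product identity $(D B^*)^* = B D^*$ also deserves attention in the unbounded setting, but it is standard once one notes that $D B^*$ is automatically densely defined.
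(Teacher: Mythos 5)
Your proposal is correct and follows essentially the same route as the paper: both arguments translate the form inequality $AA^* \leq BB^*$ into the pointwise estimate $\| A^* y \| \leq \| B^* y \|$ on $\nbdom B^* \subseteq \nbdom A^*$, define the contraction $D_0 B^* y := A^* y$ on $\nbran B^*$, extend by continuity and by zero, and pass to adjoints via $C := D^*$ to obtain $A \subseteq BC$. Your explicit attention to the identity $\nbdom T^* = \nbdom \sqrt{TT^*}$ with $\| T^* y \| = \| \sqrt{TT^*}\, y \|$ (from the polar decomposition of $T^*$) makes precise a step the paper uses implicitly, but the substance of the argument is the same.
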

\begin{proof}
This is essentially the first part of \cite[Theorem 2]{DFL}, however the partial order on positive semi-definite closed operators is defined slightly differently in \cite{DFL} than in Definition \ref{porder}. We verify that the same arguments as in \cite{DFL} work for the quadratic form partial order defined above. Assuming then that $AA^* \leq B B^*$ in the sense of positive semi-definite quadratic forms we have that $\nbdom B^* = \nbdom \sqrt{BB^*} \subseteq \nbdom \sqrt{AA^*} = \nbdom A^*$, and for any $x \in \nbdom \sqrt{BB^*}$,
$$ q_{AA^*} (x,x) = \ip{A^*x}{A^*x}_\cH \leq \ip{B^* x}{B^* x}_\cH = q_{BB^*} (x,x). $$ 
We define a linear map $D _0 : \nbran B^* \rightarrow \nbran A^*|_{\nbdom B^* }$ by:
$$ D _0 B^* x := A^* x. $$ The above quadratic form inequality implies $D _0$ is a contraction and hence extends by continuity to a contraction, $\ov{D}  _0$, from $\ran{B^*} ^{-\| \cdot \|}$ into $\ran{A^*} ^{-\| \cdot \|}$. We then extend $\ov{D}  _0$ by $0$ to a contraction, $D$, defined on all of $\cH$.  Namely, $D x = \ov{D} _0 x$ if $x \in \ran{B^*} ^{-\| \cdot \|}$ and $D x =0$ if $x$ is orthogonal to $\nbran B^*$. Let $C := D^*$ so that $D = C^*$. Since $\nbdom B^* \subseteq \nbdom A^*$, we have that $C^* B^* \subseteq A^*$ in the sense that $C^* B^* | _{\nbdom A^*} = A^*$. In particular it follows that the operator $C^* B^*$ is closeable (it has $A^*$ as a closed extension), and one can verify that its adjoint is $B C$. Since $C^* B^* \subseteq A^*$, it further follows that $A \subseteq B C$. 
\end{proof}

\section{Proof of Sufficiency}

Let $A \in \scr{L} (\cH )$ be injective, self-adjoint, multiplicity--free and not invertible.
Consider the positive semi-definite quadratic forms,
\begin{align*} q (x,y) &:= \ip{  A^{-1} x}{A ^{-1} y}; && \nbdom q = \nbran A \\
\mbox{and} \quad q_0 (x,y) &:= \ip{\inv{A} _0  x}{\inv{A} _0  y} &&\nbdom q_{0} = \nbdom \inv{A} _0 \subsetneqq \nbdom A^{-1} = \nbran A, \end{align*} where $\inv{A} _0$ is the simple symmetric operator with indices $(1,1)$ constructed in Subsection \ref{simsym}. Since $\nbdom \inv{A} _0  \subsetneqq \nbdom A ^{-1}$ by construction, we have that $q \leq q_0$ in the quadratic form sense of Definition \ref{porder}. Moreover, since $q_0 \subseteq q$, $q$ is a closed extension of $q_0$, $q_0$ is closeable and has a minimal closed extension. In fact, since $\inv{A} _0$ is closed, one can check directly that $q_0$ with form domain $\nbdom q_0 = \nbdom \inv{A} _0$ is closed. By the unbounded Riesz lemma, \cite[Chapter VI, Theorem 2.1, Theorem 2.23]{Kato}, there is a unique, self-adjoint (hence closed) and positive operator $\inv{B}$ so that $q_0 (x,y) = \ip{\inv{B} x}{\inv{B} y}$ and
$$ \nbdom \inv{B} = \nbdom \inv{A} _0. $$ (By the uniqueness of the Riesz representation of positive quadratic forms, this implies that $\inv{B} = | \inv{A} _0 | = \sqrt{\inv{A} _0 ^* \inv{A} _0}  >0$, where $\inv{A} _0  = W_0 | \inv{A} _0 |$ is the polar decomposition of the closed operator $\inv{A} _0$.) Theorem \ref{invorder} and a routine functional calculus argument imply that $B := \inv{B} ^{-1} >0$ is bounded and obeys $B^2 \leq A^2$ so that $\inv{B} = B^{-1}$. First observe that $\inv{B} >0$ is injective and self-adjoint, and so it has a self-adjoint inverse, $B>0$, with $\nbdom B := \nbran \inv{B}$. Indeed, if there is a non-zero $x \in \nbdom \inv{B}$ so that $\inv{B} x =0$, then $q \leq q_0$ implies that $A^{-1}x=0$ as well, contradicting the fact that $A^{-1}$ has a bounded inverse. Set $B := \inv{B} ^{-1}$ and write $B^{-1}$ in place of $\inv{B}$. By Theorem \ref{invorder}, since $A^{-2} \leq B^{-2}$ in the sense of Definition \ref{porder}, for any $t>0$,
$$ ( t I + B^{-2} )^{-1} \leq (tI + A^{-2} ) ^{-1}. $$ Since $A^2$ is bounded, the spectrum of $A^{-2}$ is contained in $[ \| A \| ^{-2}, + \infty )$. Taking $t =1$, spectral mapping implies that if $f(x) := (1 +x ) ^{-1}$, the spectrum of $(I + A ^{-2} ) ^{-1} = f (A^{-2} )$ is contained in $[0 , (1+ \| A \| ^{-2} ) ^{-1} ]$. The inequality $(I + B^{-2} ) ^{-1} \leq (I + A^{-2} ) ^{-1}$ then implies that the spectrum of $B ^{-1}$ is also contained in $[ \| A \| ^{-2}, + \infty )$. Indeed, if there were a $0 \leq  t < \| A \| ^{-2}$ in the spectrum of $B^{-2}$, then $(1 +t ) ^{-1} > (1+ \| A \| ^{-2}) ^{-1}$ would again, by spectral mapping, belong to the spectrum of $f(B^{-2} ) = (I + B ^{-2} ) ^ {-1}$, contradicting the fact that $f(B^{-2} ) \leq f (A^{-2})$. Since the spectrum of $B^{-2} > 0$ and hence of $B^{-1} >0$ is bounded below by a strictly positive constant, $B = (B^{-1} ) ^{-1}$ is bounded. For any $t >0$, if $f_t (x) := (t +x ) ^{-1}$, then $f_t (x)$ is continuous on $[ \eps , + \infty )$, for any fixed $\eps >0$, and $f_t$ converges uniformly on this interval to $f_0$, $f_0 (x) = x^{-1}$, as $t \downarrow 0$. The functional calculus for self-adjoint operators then implies that $f_t (B ^{-2}) = B^2 (tB^2 +I ) ^{-1}$ and $f_t (A^{-2} )$ both converge in operator--norm to $B^2$ and $A^2$ respectively. This, and the inequality $f_t (B^{-2} ) \leq f_t (A^{-2})$ for all $t>0$ imply that $B^2 \leq A^2$.

By the unbounded Douglas factorization lemma, since $A^{-2} \leq B^{-2}$ in the sense of Definition \ref{porder}, there is a contraction, $C$, so that $C^* B^{-1} \subseteq A^{-1}$, \emph{i.e.}
$$ C^* B^{-1} x  = A^{-1} x = \inv{A} _0  x, \quad \quad \forall \ x \in \nbdom \inv{A} _0 = \nbdom B^{-1} = \nbran B. $$ Since the forms $q_0$ and $q$ associated to $B^{-1}$ and $A^{-1}$ agree on $\nbdom \inv{A} _0 = \nbdom B^{-1} \subseteq \nbdom A^{-1}$, $C^* = V$ is an isometry with range
$$ \nbran V =  A ^{-1} \nbdom \inv{A} _0 = \nbran \inv{A} _0, $$ and $V B ^{-1} = \inv{A}_0$. Here, the range of $\inv{A} _0$ is closed since $\inv{A}_0$ is closed, symmetric and \emph{bounded below in norm}. Namely, since $A$ is bounded and $$ \nbdom \inv{A} _0 = \nbdom B^{-1} = \nbran B \subseteq \nbdom A^{-1} = \nbran A, $$ given any $x = Ay \in \nbran B \subseteq \nbran A$,
$$ \|  \inv{A} _0 x \|  =  \| A^{-1} Ay \| = \| y \|, $$ and $\| A y \|  \leq \| A \| \, \| y \|$ so that 
\be  \| \inv{A} _0 x \| \geq \frac{1}{\| A \|} \| x \|. \label{bbelow} \ee
This inequality, and the fact that $\inv{A} _0$ is a closed operator imply that $\nbran \inv{A} _0$ is closed. Hence,
\be V B ^{-1} = \inv{A} _0 \subseteq A ^{-1}, \quad \quad \nbran V = \nbran \inv{A} _0 = A^{-1} \nbran B, \label{DFLisom} \ee and $B ^{-1} \subseteq V^* A ^{-1}$.

\begin{lemma} \label{HankdoubleHank}
The self-adjoint operator, $A$, is Hankel with respect to $V$ and $B = V^*A = AV >0$. 
\end{lemma}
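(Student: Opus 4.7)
The plan is to first establish the identity $AV = B$ as bounded operators on all of $\cH$, and then deduce the full Hankel condition $V^*A = AV$ by a short adjoint argument, using that both $A$ and $B$ are bounded and self-adjoint. Positivity of $B$ is already immediate from the earlier construction, since $B^{-1} = |\inv{A}_0|$ is injective and positive, so its bounded inverse $B$ is as well.

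For the key step, I would start from the containment $V B^{-1} = \inv{A}_0 \subseteq A^{-1}$ established in \eqref{DFLisom}. For any $x$ in the common domain $\nbdom \inv{A}_0 = \nbdom B^{-1} = \nbran B$, we have $V B^{-1} x = A^{-1} x \in \cH$. Since $A$ is everywhere-defined and bounded, I may apply $A$ to both sides to get $A V B^{-1} x = A A^{-1} x = x$. Now I change variables via $u := B^{-1} x$: because $B \colon \cH \to \nbran B$ is a bounded bijection with inverse $B^{-1}$, the map $x \mapsto u = B^{-1}x$ is a bijection from $\nbran B$ onto $\cH$ with inverse $u \mapsto x = Bu$. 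Substituting, I obtain $(AV) u = B u$ for every $u \in \cH$, and hence $AV = B$ as bounded operators.

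With $AV = B$ in hand, I take adjoints: since $A = A^*$ and $B = B^*$ are bounded, $V^*A = V^*A^* = (AV)^* = B^* = B$. Combining gives $V^*A = AV = B$, which simultaneously yields the $V$-Hankel condition and identifies the common value with $B > 0$.

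The only real obstacle here is bookkeeping of domains: one must confirm that $B^{-1}(\nbran B) = \cH$ so that the identity $AV = B$ holds on all of $\cH$ rather than on a merely dense subspace, and must invoke the equality $\nbdom \inv{A}_0 = \nbdom B^{-1}$ which was built into the construction of $B^{-1}$ via the unbounded Riesz lemma. Neither is difficult; the bulk of the content of the lemma has already been arranged in advance, and what remains is essentially a one-line computation followed by taking adjoints.
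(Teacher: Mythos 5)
Your proposal is correct and follows essentially the same route as the paper: both arguments use surjectivity of $B^{-1}$ onto $\cH$ (equivalently, that $B^{-1}\colon \nbran B \to \cH$ is a bijection) to upgrade the containment $V B^{-1} = \inv{A}_0 \subseteq A^{-1}$ to the everywhere-defined identity $AV = B$, and then obtain $V^*A = (AV)^* = B$ by taking adjoints of bounded self-adjoint operators. The domain bookkeeping you flag is exactly the point the paper's proof also relies on, and it is handled correctly.
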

\begin{proof}
Since $B>0$ is bounded, its inverse, $B^{-1}>0$ is surjective. In particular, any $x \in \cH$ has the form $x = B ^{-1} y$.  Then,
$$ V x = V B ^{-1} y = A ^{-1} y, $$ and 
$$ A V x = y = B x. $$ This proves that $A V = B >0$, and so by taking adjoints,
$$ V^* A = A V = B > 0. $$ In particular, $A$ is a Hankel operator with respect to $V$.
\end{proof}

\begin{remark} \label{BVHankel}
Further note that $B>0$ is also $V-$Hankel since 
$$ V^* B = V^* A V = A V^2 = B V. $$ This also shows that $V^* \nbran B \subseteq \nbran B$.
\end{remark}

The conditions that $A>0$ is $V-$Hankel or that $A>0$ is $V-$Hankel and doubly--positive place certain restrictions on both $A$ and the isometry $V$. 

\begin{prop} \label{pureHank}
Suppose that $A \in \scr{L} (\cH )$ is positive, injective and Hankel with respect to an isometry $V$. Then the Wold decomposition of $V$ is $V = V_0 \oplus J$ acting on $\cH = \cH _0 \oplus \cH '$ where $V_0$ is a pure isometry and $J$ is a symmetry on $\cH' = \cH ' _+ \oplus \cH ' _-$, $J \cH ' _{\pm} = \pm \cH ' _\pm$. The subspaces $\cH _0$, $\cH ' _+$ and $\cH ' _-$ are each invariant for $A$. If $A$ is also doubly--positive with respect to $V$, \emph{i.e.} if $AV >0$, then $\cH ' _- = \{ 0 \}$ and $J=I$.
\end{prop}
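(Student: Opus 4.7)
The strategy uses the Wold decomposition $V = V_0 \oplus U$ on $\cH = \cH_0 \oplus \cH'$ (with $V_0$ pure and $U$ unitary on $\cH' = \bigcap_n \ran{V^n}$) and the $2 \times 2$ block structure of $A$ with respect to this orthogonal decomposition. Denote the diagonal blocks by $A_{00}$ on $\cH_0$ and $A_{11}$ on $\cH'$, and the off-diagonal block by $D := P_{\cH_0} A |_{\cH'} : \cH' \to \cH_0$. The Hankel condition $V^* A = AV$ separates into four block equations; the two I will use are the off-diagonal $V_0^* D = D U$ and the diagonal $U^* A_{11} = A_{11} U$. A preliminary observation is that $A_{11} > 0$ strictly: for nonzero $h' \in \cH'$, $\ip{A(0 \oplus h')}{0 \oplus h'} = \ip{A_{11} h'}{h'} > 0$ by strict positivity of $A$.

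From here the argument proceeds in a forced order. First, I would show $U$ is a symmetry using the diagonal relation in its equivalent form $U A_{11} = A_{11} U^*$ together with strict positivity of $A_{11}$. The Borel functional calculus for the unitary $U$ yields $E_U(\Om) A_{11} = A_{11} E_U(\Om^*)$ for every Borel $\Om \subseteq S^1$, where $\Om^* := \{ \bar z : z \in \Om \}$. If $\Om \cap \Om^* = \emptyset$, then any $h \in \ran{E_U(\Om^*)}$ has $A_{11} h \in \ran{E_U(\Om)} \perp h$, so $\ip{A_{11} h}{h} = 0$ and strict positivity of $A_{11}$ forces $h = 0$. This rules out any spectrum of $U$ off $\{z \in S^1 : z = \bar z \} = \{\pm 1\}$, so $U$ is a symmetry $J = P_+ - P_-$ with $\cH' = \cH'_+ \oplus \cH'_-$ the $\pm 1$ eigenspaces.

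Next, having identified $U = J$, I would kill $D$. The off-diagonal relation restricts to $V_0^* D|_{\cH'_\pm} = \pm D|_{\cH'_\pm}$, so for each fixed $h \in \cH'_+$ the vector $D h \in \cH_0$ is a fixed vector of $V_0^*$; iteration gives $V_0^{*n}(D h) = D h$, and since $V_0$ is pure, $V_0^{*n} \to 0$ strongly, which applied to the fixed vector $D h$ forces $D h = 0$. The analogous argument on $\cH'_-$ via even powers $V_0^{*2n}$ gives $D = 0$ outright, so $A$ is block-diagonal and both $\cH_0, \cH'$ are $A$-invariant. The invariance of $\cH'_\pm$ then follows because $A|_{\cH'} = A_{11}$ commutes with $J$: the diagonal Hankel relation collapses to $J A_{11} = A_{11} J$ since $J = J^*$.

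For the doubly-positive case, since $\cH'$ is both $V$- and $A$-invariant, $(AV)|_{\cH'} = A_{11} J$ must be positive semi-definite; but on $\cH'_-$ one has $A_{11} J h = -A_{11} h$, which gives $\ip{A_{11} J h}{h} < 0$ for nonzero $h$ and forces $\cH'_- = \{0\}$ and $J = I$. The main obstacle is the forced ordering of the argument: a direct attempt to kill $D$ from $V_0^* D = D U$ alone fails when $U$ has continuous spectrum on $S^1$, because $V_0^{*n} \to 0$ strongly only yields $\|D U^n x\| \to 0$ for each fixed $x$, and one cannot conclude $D = 0$ since $U^n x$ traverses a norm-preserving orbit rather than a fixed vector. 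Strict positivity of the block $A_{11}$ is precisely what allows one to pin $U$ down as a symmetry first via the spectral theorem, after which the absence of unimodular eigenvalues of $V_0^*$ disposes of the off-diagonal block essentially for free.
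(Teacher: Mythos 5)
Your proof is correct and follows the same overall architecture as the paper's: Wold decomposition $V = V_0 \oplus U$, the $2\times 2$ block form of $A$, the observation that the corner block $A_{11}$ inherits positivity and injectivity from $A$, reduction of the unitary part $U$ to a symmetry, and then elimination of the off-diagonal block via purity of $V_0$. The one step you execute differently is showing $U = U^*$: the paper multiplies the intertwining relation against its adjoint to obtain $U^* A_2^2\, U = A_2^2$, concludes by the functional calculus that $A_2 = \sqrt{A_2^2}$ commutes with $U$, and then combines $U^* A_2 = A_2 U^*$ with the Hankel relation $U^* A_2 = A_2 U$ to get $A_2 (U - U^*) = 0$, whence $U = U^*$ by injectivity. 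Your route through the spectral measure of $U$ and the intertwining $E_U(\Om) A_{11} = A_{11} E_U(\Om ^*)$ is also valid --- the identity follows from matching Fourier coefficients of the associated complex measures, since $U^n A_{11} = A_{11} U^{*n}$ for all $n \in \mathbb{Z}$ --- and it localizes the spectrum of $U$ to $\{ \pm 1 \}$ directly; it is slightly longer but makes transparent exactly where strict positivity of $A_{11}$ enters. The remaining steps (killing $D$ on $\cH ' _\pm$ via $V_0 ^{*n} \rightarrow 0$, with even powers on $\cH ' _-$; invariance of $\cH ' _\pm$ from $J A_{11} = A_{11} J$; and the sign obstruction $\ip{A_{11} J h}{h} < 0$ on $\cH ' _-$ in the doubly--positive case) match the paper's argument, which uses $V^{*2n} A = A V^{2n}$ and the block form of $A_2 J$, essentially verbatim.
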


\begin{proof}
Consider the Wold decomposition $V = V_0 \oplus U \simeq \bsm V_0 & \\ & U \esm$ of $V$ on $\cH = \cH _0 \oplus \cH '$, where $V_0 = V | _{\cH _0}$ is a pure isometry and $U = V |_{\cH '}$ is unitary. Then consider the corresponding block decomposition,
$$ A =: \bpm A_1 & C \\ C^* & A_2 \epm. $$ The Hankel condition, $V^* A = A V$, then implies that 
$$ \bpm V_0 ^* A _1 & V_0 ^* C \\ U^* C^* & U^* A_2 \epm = V^* A = A V = \bpm A_1 V_0 & CU \\ C^* V_0 & A_2 U \epm. $$ In particular, $U^* A_2 = A_2 U$ and $A_2$ is $U-$Hankel. Hence,
$$ U^* A_2 ^2 U = A_2 U U^* A_2 = A_2 ^2, $$ or equivalently, $U^* A_2 ^2 = A_2 ^2 U^*$. Since $A$ is positive and injective, so is $A_2$. Indeed, given any $ 0 \oplus h' \in  \cH _0 \oplus \cH ' = \cH$, $h' \neq 0$, 
$$ 0 < \ip{ 0 \oplus h '}{A  (0 \oplus h' )} _{\cH} = \ip{h'}{A_2 h'}_{\cH '}. $$ By the functional calculus, since $A_2 ^2$ commutes with $U^*$, so does $\sqrt{A_2 ^2} = A_2 >0$. That is, we have that both $U^* A_2 = A_2 U^*$ and also $U^* A_2 = A_2 U$ since $A_2$ is $U-$Hankel. In conclusion,
$$ A_2 (U-U^* ) =0, $$ and injectivity of $A_2$ then implies that $U=U^*=:J$ is a self-adjoint unitary, \emph{i.e.} a symmetry. Hence, as described in Remark \ref{symmetry}, $\cH '$ decomposes as $\cH ' = \cH ' _+ \oplus \cH ' _-$ where $J h _\pm = \pm h _\pm$ for any $h_\pm \in \cH ' _\pm$. If $A_2 = \bsm a & b \\ b^* & c \esm >0$ is the block decomposition of $A_2$ with respect to $\cH ' = \cH ' _+ \oplus \cH ' _-$, then the block decomposition of $J$ is $J = \bsm I & 0 \\ 0 & - I \esm$, and $JA_2 = A_2 J$ implies that 
$$ \bpm a & b \\ -b^* & -c \epm = JA_2 = A_2 J = \bpm a & -b \\ b^* & - c \epm. $$ It follows that $b = - b$ so that $b\equiv 0$. Now, $V= \bsm V_0 & 0 \\ 0  & J \esm$ on $\cH = \cH _0 \oplus \cH '$, where $J^2 =I$ so that
\ba \bpm V_0 ^{*2n} A_1 & V_0 ^{*2n} C \\ C^* & A_2 \epm & = & \bpm V_0 ^{*2n} & 0 \\ 0  & I \epm \bpm A_1 & C \\ C^* & A_2 \epm \nn \\
& = & V ^{*2n} A  \nn \\
& = & A V^{2n} \nn \\
& = & \bpm A_1 & C \\ C^* & A_2 \epm \bpm V_0 ^{2n} &  0 \\ 0 & I \epm \nn \\
& = & \bpm A_1 V_0 ^{2n} & C \\ C^* V_0 ^{2n} & A_2 \epm. \nn \ea 
In particular, $V_0 ^{*2n} C = C$ which implies that $C \equiv 0$ since $V_0$ is a pure isometry and $V_0 ^{*n} \rightarrow 0$ in the strong operator topology. This proves that $A >0$ is block diagonal with respect to the Wold decomposition of $V = V_0 \oplus J$ and $\cH = \cH _0 \oplus \cH '$. Since both $C \equiv 0$ and $b \equiv 0$, $A$ is block diagonal with respect to $\cH = \cH _0 \oplus \cH ' _+ \oplus \cH ' _-$, $A = \bsm A_1 & 0 & 0 \\ 0 & a & 0 \\ 0 & 0 & c \esm$, and each of $\cH _0$, $\cH ' _+$ and $\cH ' _-$ are $A-$invariant. If $A$ is doubly--positive and Hankel with respect to $V$, then both $A_2 = \bsm a & 0 \\ 0 & c \esm >0$ and $A_2J = \bsm a & 0 \\ 0 & -c \esm >0$. Since $A_2$ is injective, this implies that $\cH ' _- = \{ 0 \}$ so that $\cH ' = \cH ' _+$ and $J =I$. 
\end{proof}

Theorem \ref{Hdpos} is a straightforward consequence of the previous results:

\begin{proof}{ (of Theorem \ref{Hdpos})}
If $A = J|A|$ is $V-$Hankel where $J = J^* = J^{-1}$ is a symmetry and $W:= JV$ then,
\ba W^* |A| & = & V^* J ^* |A| = V^* J |A|  \nn \\
& = & V^* A = A V  \nn \\
& = & |A| J^* V = |A| JV = |A| W. \nn \ea Hence, $|A| >0$ is $W-$Hankel. Moreover, by assumption, $V^* A  = AV >0$ so that 
$W^* |A | = V^* J |A| = V^* A >0$. It follows that $|A|$ is $W-$Hankel and doubly--positive. It is easily checked that $W$ has pure part of multiplicity $N$ if $V$ has multiplicity $N$. Finally, if the Wold decomposition of $W$ is $W=W_0 \oplus U$ on $\cH = \cH _0 \oplus \cH '$, where $W_0$ is pure and $U$ is unitary, the previous proposition implies that both $\cH _0, \cH '$ are $|A|-$invariant and that either $U = I _{\cH ' }$ or $\cH ' = \{ 0 \}$.
\end{proof}

The following lemma is essentially a special case of a construction known to M.G. Kre\u{\i}n, \cite[Section 1.2]{Krein}:

\begin{lemma} \label{Dspace}
Let $T_0$ be a closed, symmetric operator with dense domain, $\nbdom T_0$, in a Hilbert space, $\cH$. Further assume that $T_0$ has a self-adjoint extension, $T$, with domain in $\cH$ such that $T$ has a bounded inverse, $T^{-1} \in \scr{L} (\cH )$. If $X := T (T -iI ) ^{-1} = (I - i T^{-1} ) ^{-1}$, then $X$ implements a bijection from $\nbran T_0 ^\perp$ onto $\ran{T_0 +i I} ^\perp$. In particular,
$$n = \nbdim \nbran T_0 ^\perp = \nbdim \ran{T_0 +i I} ^\perp, $$ and $T_0$ has equal deficiency indices $(n,n)$. 
\end{lemma}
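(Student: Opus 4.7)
The plan is to present $X$ as a Cayley-type bijection of $\cH$, then show by a direct inner-product computation that it restricts to a bijection between $\nbran T_0 ^\perp$ and $\ran{T_0 + iI}^\perp$; equality of the deficiency indices will follow by rerunning the same argument with $i$ replaced by $-i$.

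First I would observe that $T=T^*$ with $T^{-1} \in \scr{L}(\cH)$ forces $\sigma(T) \subseteq \R \sm \{0\}$, so in particular $\pm i$ lie in the resolvent set of $T$ and $(T \mp iI)^{-1} \in \scr{L}(\cH)$. The operator identity $(I-iT^{-1})T = T - iI$, valid on $\nbdom T$, yields both the stated equality $X = T(T-iI)^{-1} = (I - iT^{-1})^{-1}$ and, read the other way, $X^{-1} = I - iT^{-1}$, which is bounded by hypothesis. Hence $X$ is a bounded bijection of $\cH$ onto itself. I would also record the useful rewriting $X = I + i(T - iI)^{-1}$: for $y \in \cH$, setting $z := (T-iI)^{-1} y \in \nbdom T$, one has $Tz = y + iz$ and $Xy = Tz = y + iz$.

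The core of the argument is a pair of short calculations. For $y \in \nbran T_0 ^\perp$ and $x \in \nbdom T_0 \subseteq \nbdom T$, I would use $T_0 \subseteq T$ to expand
\[ \ip{(T_0 + iI)x}{Xy} = \ip{(T+iI)x}{y} + i\,\ip{(T+iI)x}{z}. \]
The first summand collapses via $\ip{T_0 x}{y} = 0$ to $-i\ip{x}{y}$, while self-adjointness of $T$ together with $Tz = y + iz$ reduces the second to $i\ip{x}{(T-iI)z} = i\ip{x}{y}$; the two terms cancel, so $Xy \in \ran{T_0+iI}^\perp$. Conversely, for $w \in \ran{T_0+iI}^\perp$ and $y := X^{-1} w = w - iT^{-1} w$, the orthogonality $\ip{(T_0+iI)x}{w} = 0$ rewrites as $\ip{T_0 x}{w} = i\ip{x}{w}$, whereas $T^{-1} w \in \nbdom T$ together with $T=T^*$ gives $\ip{T_0 x}{T^{-1}w} = \ip{Tx}{T^{-1}w} = \ip{x}{T T^{-1}w} = \ip{x}{w}$; inserting both into $\ip{T_0 x}{y} = \ip{T_0 x}{w} - i\ip{T_0 x}{T^{-1}w}$ produces cancellation to $0$, so $y \in \nbran T_0 ^\perp$.

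Combining these with the bijectivity of $X$ on $\cH$ gives the asserted bijection $X : \nbran T_0 ^\perp \to \ran{T_0 + iI}^\perp$, whence $n := \nbdim \nbran T_0 ^\perp = n_+$. Running the identical argument with $X^* = T(T+iI)^{-1}$ in place of $X$ (equivalently, with $i$ replaced by $-i$ throughout) yields a bijection $\nbran T_0 ^\perp \to \ran{T_0 - iI}^\perp$, so $n = n_-$ as well, and $T_0$ has deficiency indices $(n,n)$ as claimed. The only real obstacle is bookkeeping---tracking the domain inclusions $\nbdom T_0 \subseteq \nbdom T$ and $T^{-1}\cH = \nbdom T$ needed to apply $T=T^*$ legitimately at each step, and correctly handling the conjugate-linearity $\ip{i\xi}{\eta} = -i\ip{\xi}{\eta}$ in the first slot---but no deeper machinery beyond self-adjointness of $T$ and elementary resolvent calculus is required.
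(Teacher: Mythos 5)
Your proposal is correct and follows essentially the same route as the paper: both present $X$ as a bounded bijection of $\cH$ with inverse $I - iT^{-1}$ and verify, by the same pair of inner-product computations exploiting $T_0 \subseteq T = T^*$, that $X$ and $X^{-1}$ exchange $\nbran T_0 ^\perp$ and $\ran{T_0 + iI}^\perp$. The only cosmetic difference is in how $n_+ = n_-$ is obtained --- the paper cites the standard fact that a symmetric operator admitting a self-adjoint extension in the same space has equal deficiency indices, while you rerun the computation with $X^* = T(T+iI)^{-1}$ to hit $\ran{T_0 - iI}^\perp$ directly --- and both are valid.
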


\begin{proof}
Since $T_0$ has a self-adjoint extension with dense domain in $\cH$, namely $T$, $T_0$ must have equal deficiency indices $(n,n)$, $n \in \N \cup \{ 0 \} \cup \{ + \infty \}$ \cite[Sections 78--81]{Glazman}, \cite[Chapter X.1]{RnS2} so that $\nbdim \ran{T_0 +i I} ^\perp =n$. 
Note that $X$ is bounded, normal and invertible with inverse $Y=I -iT^{-1}$.
Given any $g \in \nbdom T_0$ and any $h \perp \nbran T_0 $, \emph{i.e.} any $h$ orthogonal to $\nbran T_0$, calculate
\ba \ip{(T_0 + iI)g}{Xh} & = & \ip{(T+iI)g}{T (T -i I)^{-1} h} \nn \\
& = & \ip{Tg}{h} \nn\\
& = & \ip{T_0 g}{h} =0. \nn \ea 
Since $X$ is injective (it is invertible), this proves that $X$ is an injective map from $\nbran T_0 ^\perp$ into $\ran{T_0 + iI} ^\perp$. Conversely, given any $g \in \nbdom T_0$ and any $h \perp \ran{T_0 +iI}$,
\ba \ip{T_0g}{Yh} & = & \ip{Tg}{(I-iT^{-1})h} \nn \\
& = & \ip{(T+iI)g}{h} \nn \\
& = & \ip{(T_0 +iI )g}{h} =0. \nn \ea 
Hence, $Y=X^{-1}$ is an injective map from $\ran{T_0 +iI} ^\perp$ into $\nbran T_0 ^\perp$. It follows that $X : \nbran T_0 ^\perp \rightarrow \ran{T_0 +iI} ^\perp$ is also surjective, hence bijective: If $f \in \ran{T_0 +iI} ^\perp$, then $h:=Yf \in \nbran T_0 ^\perp$, and then $Xh = XY f  =f \in X \, \nbran T_0 ^\perp$. 
\end{proof}

We are now sufficiently prepared to complete the proof of our main result, Theorem \ref{main}:

\begin{proof}{ (Sufficiency in Theorem \ref{main})}
Recall that we assume, without any loss in generality, that $A=A^* \in \scr{L} (\cH )$ is multiplicity--free. We have constructed an isometry, $V$, so that $A$ obeys the Hankel condition with respect to $V$, where $A=A^*$ is injective but not invertible, and we have constructed a closed, densely--defined and simple symmetric operator $\inv{A} _0 \subseteq A^{-1}$ with deficiency indices $(1,1)$, domain $\nbdom \inv{A} _0 = \nbran AV$ and $\nbran \inv{A} _0 = \nbran V$. We further observed in Remark \ref{BVHankel} above, that $B = V^* A = AV$ is positive and $V-$Hankel. By Proposition \ref{pureHank}, since $B$ is positive, injective and $V-$Hankel, the Wold decomposition of $V$ is $V = V_0 \oplus J$ on $\cH = \cH _0 \oplus \cH '$, where $V_0$ is pure, $J$ is a symmetry and $B = \bsm B_1 & 0 \\ 0 & B_2 \esm$ is block diagonal with respect to this decomposition. Hence, if $A = \bsm A_1 & C \\ C^* & A_2 \esm$ is the corresponding block decomposition of $A$, we have 
$$ AV = \bpm A_1 & C \\ C^* & A_2 \epm \bpm V_0 & 0 \\ 0 & J \epm = \bpm A_1 V_0 & C J  \\ C^* V_0 & A_2 J \epm = \bpm B_1 & 0 \\ 0 & B_2 \epm = B. $$
In particular, $CJ =0$, and it follows that $C \equiv 0$ since $J$ is unitary. This proves that $A = \bsm A_1 & 0 \\ 0 & A_2 \esm$ is also block diagonal with respect to the Wold decomposition of $V$. It remains to show that $V$ is pure and unitarily equivalent to one copy of the shift. Since $V = V_0 \oplus J$,
$$ \nbran V = \nbran V_0 \oplus \cH ' = \nbran \inv{A} _0. $$ In particular, given any $x_0 \in \nbran V_0$ and any $x' \in \cH '$, $x = x_0 \oplus x' \in \nbran V$ and both  $x_0 \oplus 0$ and $0 \oplus x'$ also belong to $\nbran V = \nbran \inv{A}_0$. Hence, given any $x' \in \cH'$, there is an $h = h _0 \oplus h' \in \nbdom \inv{A}_0$ so that $\inv{A}_0 h = 0 \oplus x'$. Since $\inv{A} _0 \subseteq A^{-1}$, it follows that 
\ba 0 \oplus x' & = & \inv{A} _0 (h_0 \oplus h' ) \nn \\
& = &   A ^{-1} (h_0 \oplus h' ). \nn \ea 
Applying $A$ to both sides of this equation gives $$ A (0 \oplus x') = 0 \oplus A_2 x' = h_0 \oplus h', $$ so that $h_0 =0$, $h' = A_2 x'$ and $A ( 0 \oplus x') = 0 \oplus A_2 x' = 0 \oplus h' \in \nbdom \inv{A} _0$. In conclusion,
$$ \inv{A} _0 (0 \oplus A_2 x') = A^{-1} ( 0 \oplus A_2 x') = 0 \oplus x', $$ 
so that 
$$ \inv{A} _0 | _{\{ 0 \} \oplus \nbran A_2} = A^{-1} | _{\{ 0 \} \oplus \nbran A_2} = 0 \oplus A_2 ^{-1} | _{\{ 0 \} \oplus \nbran A_2}, $$ is self-adjoint, contradicting the simplicity of $\inv{A} _0$. This proves that $V=V_0$ must be pure. Since $\inv{A} _0$ has deficiency indices $(1,1)$, we have that
$$ \nbdim \nbran (\inv{A} _0 \pm i I) ^\perp =1, $$ and Lemma \ref{Dspace} then implies that $\nbdim \nbran \inv{A} _0  ^\perp = \mr{dim} \, \nbran V ^\perp =1$.  Hence $V \simeq S$ is unitarily equivalent to the unilateral shift.
\end{proof}

We now prove our second main result, Theorem \ref{main2}:

\begin{proof}{ (of Theorem \ref{main2})}
Suppose that $A \in \scr{L} (\cH )$ satisfies the assumptions of the theorem statement. Namely, $A$ is self-adjoint, injective and not invertible so that $A^{-1}$ with domain $\nbdom A^{-1} = \nbran A$ is densely--defined and self-adjoint. Suppose that $\inv{A} _0 \subseteq A^{-1}$ is a closed, symmetric and non-self--adjoint restriction of $A^{-1}$ to some dense domain in $\cH$ with deficiency indices $(N,N)$ where $N \in \N \cup \{ + \infty \}$. (In fact, the assumption that $\inv{A} _0$ has $A^{-1}$ as a self-adjoint extension implies that $\inv{A} _0$ must have equal deficiency indices \cite[Sections 78--81]{Glazman}, \cite[Chapter X.1]{RnS2}, \cite{vN}.) As before we can define two closed and positive semi-definite quadratic forms, $q, q_0$ with form domains $\nbdom q = \nbdom A^{-1} = \nbran A$ and $\nbdom q_0 = \nbdom \inv{A} _0 \subseteq \nbdom q$ by
$$ q(x,y) := \ip{A^{-1} x}{A^{-1}y}; \quad x,y \in \nbdom q, \quad \mbox{and} \quad q_0 (x,y) = \ip{A^{-1} x}{A^{-1} y}; \quad x,y \in \nbdom q _0.$$
By construction, $q_0 \subseteq q$ and $q \leq q_0$. Also as before, by the unbounded Riesz lemma, there is a self-adjoint and positive linear operator $\inv{B}>0$ so that 
$$ q_0 (x,y) = \ip{\inv{B} x}{\inv{B} y}, \quad \quad \nbdom \inv{B} = \nbdom q_0, $$ and $\inv{B}$ has a bounded inverse, which we denote by $B = \inv{B} ^{-1} >0$. (Hence $\inv{B} = B^{-1}$ and we will write $B^{-1}$ in place of $\inv{B}$.) Again, by Douglas factorization there is an isometry, $V$, with $\nbran V = \nbran \inv{A} _0$, so that 
$V B^{-1} = \inv{A} _0$, and the argument of Lemma \ref{HankdoubleHank} implies that $B = V ^* A = A V$ so that $A$ is $V-$Hankel. Further note that $q_0 = q_{\inv{A} _0 ^* \inv{A} _0} = q_{B^{-2}}$ implies that $B^{-1} = |\inv{A} _0 |$ \cite[Chapter VI, Theorem 2.1, Theorem 2.23]{Kato}. This, and the equality $VB^{-1} = \inv{A} _0$ imply that if the (unique) polar decomposition of the closed operator $\inv{A} _0$ is $\inv{A} _0 = V' | \inv{A} _0 | = V' B^{-1}$, then $V' =V$. By Lemma \ref{Dspace}, 
$$ \nbdim \nbran V ^\perp = \nbdim \ran{\inv{A} _0 + iI} ^\perp = N, $$ so that the pure part of $V$ has multiplicity $N \neq 0$. Arguments similar to those in the proof of Theorem \ref{main} show that if we further assume that $\inv{A} _0$ is simple, then this assumption and the facts that both $B= V^*A >0$ and $A$ are $V-$Hankel, imply that $V$ is pure.

Conversely, suppose that $A$ is self-adjoint, injective and not invertible, and that $A$ is Hankel with respect to some isometry $V$ with pure part, $V_0$, of multiplicity $N>0$, where $B := V^*A = AV$. Note that $B$ need not be positive semi-definite here. Define the positive semi-definite quadratic forms 
$$ q_0 (x,y) := \ip{B^{-1} x}{B^{-1}y}_{\cH}, \quad \quad x,y \in \nbran B \subseteq \nbran A, $$ and 
$$ q(x,y) := \ip{ A^{-1} x}{A^{-1} y}_\cH, \quad \quad x,y \in \nbran A. $$
We claim that $q_0 \subseteq q$ and $q \leq q_0$. Indeed, $\nbdom q_0 = \nbran B \subseteq \nbran A = \nbdom q$, and if $x =By =AV y \in \nbdom q_0$, then $A^{-1}x = Vy \in \nbran V$. Hence, for any $x = AV y$ and $x' = AV y'$ in $\nbdom q_0$,
\be q_0 (x,x') = \ip{y}{y'}_{\cH} = \ip{Vy}{Vy'}_{\cH} = \ip{A^{-1} AVy}{A^{-1}AVy'}_\cH = q (x,x'). \label{qeq} \ee 
Defining $\inv{A} _0$ as the restriction of $A^{-1}$ to $\nbdom B ^{-1} = \nbran B$, we will show that $\inv{A} _0$ is closed, symmetric and has equal, non-zero deficiency indices. To show that $\inv{A} _0$, with domain $\nbran B$ is closed, suppose that $x_n \in \nbran B$, $x_n \rightarrow x$, and that $\inv{A} _0 x_n =A^{-1} x_n \rightarrow y$. Then $x_n = B h_n = AV h_n \rightarrow x$ and $A^{-1} x_n = Vh_n \rightarrow y$. Since $V$ is an isometry, it has closed range and $y = Vh$ for some $h \in \cH$. Since $A$ is bounded, $x_n = AVh_n \rightarrow Ay = AVh = Bh =x \in \nbran B$ and $y = Vh = A^{-1} AVh = A^{-1}x = \inv{A} _0 x$. This proves that $\inv{A} _0$ is closed on $\nbdom \inv{A} _0 = \nbran B$. (In general any densely--defined symmetric linear operator is closeable, \cite[Theorem 1, Section 46]{Glazman}.) Since $\inv{A} _0 \subseteq A^{-1}$ has $A^{-1}$ as a self-adjoint extension, it must have equal deficiency indices $(n,n)$, $n \in \N \cup \{ 0 \} \cup \{ + \infty \}$ \cite[Sections 78--81]{Glazman}, \cite[Chapter X.1]{RnS2}, \cite{vN}.  By construction, $\nbran \inv{A} _0 = A^{-1} \nbran B = \nbran V \subsetneqq \cH$ is a closed and proper subspace of $\cH$.  Since we assume that $N = \nbdim \nbran V ^\perp$ and we have shown that $\nbran \inv{A} _0 ^\perp = \nbran V ^\perp$, Lemma \ref{Dspace} implies that $n=N>0$. Since, $\inv{A} _0$ is closed, we further obtain that $\inv{A} _0 = V B^{-1}$ where $B = V^*A = AV$.

It remains to show that purity of $V$ and the additional assumption that $B >0$ imply simplicity of $\inv{A} _0$. Suppose that $V$ is pure and $\inv{A} _0$ is not simple. Then there is a (non-closed) linear subspace, $\scr{D} \subseteq \nbdom \inv{A} _0 = \nbran B$, with closure $\cJ := \ov{\scr{D}}$, so that $\cJ$ is a closed, non-trivial subspace of $\cH$, and $\inv{A_1} := \inv{A} _0 | _{\scr{D}}$ is densely--defined in $\cJ$ and self-adjoint. Since, as shown in Equation (\ref{bbelow}), $\inv{A} _0$ is bounded below in norm, so is $\inv{A_1}$ so that $\inv{A_1}$ is surjective and has a bounded, self-adjoint inverse, $A_1 :=  \inv{A_1}  ^{-1}$ and $\inv{A_1} = A_1^{-1}$. We claim that $\cJ$ is $A-$invariant and that $A | _{\cJ} =A_1$. Indeed, first note that $A_1 ^{-1} = \inv{A} _0 | _{\scr{D}} = A^{-1}| _{\scr{D}}$ by definition. Moreover, as observed above, $A_1 ^{-1}$ is surjective on $\cJ$. Hence, given any $x \in \cJ$, there is a $y \in \scr{D} \subseteq \cJ$ so that $x = A_1 ^{-1} y = A^{-1}y$ and then $Ax = y = A_1 x \in \cJ$. This proves that $A \cJ \subseteq \cJ$ so that $\cJ$ is $A-$invariant. Further note that $\scr{D} = \cJ \cap \nbran B = \nbdom \inv{A} _0$. Certainly $\scr{D} \subseteq \cJ \cap \nbdom \inv{A} _0 =  \cJ \cap \nbran B$, and if $\scr{D} \subsetneqq \cJ \cap \nbran B =: \scr{D} '$, then $\inv{A} _0 | _{\scr{D} '}$ would be a symmetric (hence closeable) extension of the self-adjoint $A_1 ^{-1} = \inv{A} _0 | _{\scr{D}}$ acting on a dense domain in $\cJ$, and no such non-trivial extensions exist \cite[Sections 78--81]{Glazman}, \cite[Chapter X.1]{RnS2}, \cite{vN}. Hence, if $x = By = AV y \in \scr{D} = \nbran B \cap \cJ$, then $A_1 ^{-1} x = A^{-1} x = Vy \in \nbran V$. Since $A_1 ^{-1}$ maps $\scr{D} \subseteq \cJ$ onto $\cJ$, this implies $\cJ \subseteq \nbran V$. This implies, in turn, that $\cJ$ is $B^2-$invariant,
$$ B^2 \cJ = AV V^* A \cJ \subseteq A VV^* \cJ = A \cJ \subseteq \cJ, $$  and since we assume that $B>0$, $\cJ$ is also $B-$invariant so that 
$$ \cJ \supseteq B \cJ = V^* A \cJ = V^* A_1 \cJ. $$ This shows, finally, that $V^*$ maps $\nbran A_1 \subseteq \cJ$, which is dense in $\cJ$, into $\cJ$ and it follows that $\cJ$ is also $V^*-$invariant. However, $\cJ \subseteq \nbran V$ so that given any $x =Vy \in \cJ$,
$$ \| V^* x \| = \| V^*V y\| = \| y \| = \| V y \| = \| x \|, $$ and $V^* | _{\cJ}$ is isometric contradicting the purity of $V$. Namely, $V^{*k}$ converges to $0$ in the strong operator topology, so that $V^*$ cannot have any isometric restriction to an invariant subspace. This contradiction proves that $\inv{A} _0$ must be simple.
\end{proof}


\Addresses

\end{document}